\newif\ifpdf
\numberwithin{equation}{section} \swapnumbers
\newtheorem{satz}{Satz}[section]
\newtheorem{theorem}[satz]{Theorem}
\newtheorem{proposition}[satz]{Proposition}
\newtheorem{corollary}[satz]{Corollary}
\newtheorem{lemma}[satz]{Lemma}
\newtheorem{assumption}[satz]{Assumption}
\newtheorem{definition}[satz]{Definition}
\newtheorem{remark}[satz]{Remark}
\begin{document}

\title[The Yamada-Watanabe Theorem for SPDEs]{The Yamada-Watanabe Theorem for mild solutions to stochastic partial differential equations}
\author{Stefan Tappe}
\address{Leibniz Universit\"{a}t Hannover, Institut f\"{u}r Mathematische Stochastik, Welfengarten 1, 30167 Hannover, Germany}
\email{tappe@stochastik.uni-hannover.de}
\thanks{The author is grateful to an anonymous referee for valuable comments.}
\begin{abstract}
We prove the Yamada-Watanabe Theorem for semilinear stochastic partial differential equations with path-dependent coefficients. The so-called ``method of the moving frame'' allows us to reduce the proof to the Yamada-Watanabe Theorem for stochastic differential equations in infinite dimensions.
\end{abstract}
\keywords{Stochastic partial differential equation,
mild solution, martingale solution, pathwise uniqueness}
\subjclass[2010]{60H15, 60H10}
\maketitle

\section{Introduction}\label{sec-intro}

The goal of the present paper is to establish the Yamada-Watanabe Theorem -- which originates from the paper \cite{Yamada} -- for mild solutions to semilinear stochastic partial differential equations (SPDEs)
\begin{align}\label{SPDE}
dX(t) = (A X(t) + \alpha(t,X)) dt + \sigma(t,X)dW(t)
\end{align}
in the spirit of \cite{Da_Prato, Prevot-Roeckner, Atma-book} with path-dependent coefficients.
More precisely, denoting by $H$ the state space of (\ref{SPDE}), we will prove the following result (see, e.g. \cite{Ikeda} for the finite dimensional case):

\begin{theorem}\label{thm-main}
The SPDE (\ref{SPDE}) has a unique mild solution if and only if both of the following two conditions are satisfied:
\begin{enumerate}
\item For each probability measure $\mu$ on $(H,\mathcal{B}(H))$ there exists a martingale solution $(X,W)$ to (\ref{SPDE}) such that $\mu$ is the distribution of $X(0)$.

\item Pathwise uniqueness for (\ref{SPDE}) holds.
\end{enumerate}
\end{theorem}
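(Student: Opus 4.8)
The plan is to establish the two implications separately. The ``only if'' part is routine and does not require the moving-frame machinery; essentially all of the work is concentrated in the ``if'' part, which I would reduce --- via the moving-frame transformation --- to the Yamada--Watanabe theorem for stochastic differential equations (without an unbounded operator) in infinite dimensions. Concretely, for the ``only if'' part, assuming that (\ref{SPDE}) has a unique mild solution: given a probability measure $\mu$ on $(H,\mathcal{B}(H))$, I would realize $\mu$ as the law of an $\mathcal{F}_0$-measurable random variable on a stochastic basis carrying a Wiener process $W$ (a product construction), and observe that the unique mild solution on this basis, paired with $W$, is a martingale solution with $X(0)\sim\mu$; this gives (1). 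For (2), two mild solutions on one stochastic basis driven by the same $W$ and with a.s.\ equal initial values must be indistinguishable by the assumed uniqueness, which is exactly pathwise uniqueness.

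For the ``if'' part I would argue as follows. Let $(S_t)_{t\ge0}$ denote the $C_0$-semigroup generated by $A$, let $(U_t)_{t\in\mathbb{R}}$ be a $C_0$-group on a Hilbert space $\mathcal{H}\supseteq H$ extending $(S_t)_{t\ge0}$ (the dilation underlying the moving-frame method), and set $Y(t):=U_{-t}X(t)$. Then $X$ is a mild solution of (\ref{SPDE}) if and only if $Y$ solves, on $\mathcal{H}$, the genuine stochastic differential equation
\[
dY(t)=\bar\alpha(t,Y)\,dt+\bar\sigma(t,Y)\,dW(t),
\]
where $\bar\alpha(t,y):=U_{-t}\,\alpha\bigl(t,(U_s y(s))_{s}\bigr)$ and $\bar\sigma(t,y):=U_{-t}\,\sigma\bigl(t,(U_s y(s))_{s}\bigr)$ are again progressively measurable, path-dependent coefficients; since each $U_{-t}$ is bounded with $\|U_{-t}\|$ locally bounded in $t$, they inherit on compact time intervals the integrability properties of $\alpha$ and $\sigma$. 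No unbounded operator is present in the $Y$-equation, so it falls within the scope of the infinite-dimensional Yamada--Watanabe theorem for SDEs.

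Since $t\mapsto U_{-t}$ is deterministic and strongly continuous, the map $X\mapsto Y$ is a filtration-preserving bijection between the relevant classes of $H$-valued and $\mathcal{H}$-valued processes, bimeasurable in both directions, with $Y(0)=X(0)$. I would use this to check that the transformation carries martingale solutions of (\ref{SPDE}) to martingale solutions of the $Y$-equation with the same initial law, pathwise uniqueness to pathwise uniqueness, and ``unique mild solution'' to ``unique solution''. Conditions (1) and (2) then become precisely the hypotheses of the Yamada--Watanabe theorem for SDEs with path-dependent coefficients in infinite dimensions; applying that theorem to the $Y$-equation and transforming back via $X(t)=U_t Y(t)$ would yield the unique mild solution of (\ref{SPDE}).

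The hard part is not a single estimate but making this dictionary airtight: one must verify that the moving-frame change of variables is simultaneously compatible with the definitions of mild solution, martingale solution, pathwise uniqueness, and uniqueness of the solution map, and that $\bar\alpha,\bar\sigma$ genuinely meet the measurability and integrability requirements of the SDE version of the theorem (including the constraint that the admissible $\mathcal{H}$-valued solutions are exactly those with $U_s Y(s)\in H$ for all $s$). The path-dependence of the coefficients is what forces one to use the Yamada--Watanabe theorem for SDEs with path-dependent --- rather than Markovian --- coefficients; that infinite-dimensional SDE result, obtained by the Gy\"ongy--Krylov/Kurtz regular-conditional-distribution argument, is the real engine, and the contribution of this proof is the clean reduction to it.
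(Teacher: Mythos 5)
Your overall plan coincides with the paper's (reduce, via a moving frame, to the Yamada--Watanabe theorem for infinite-dimensional SDEs with path-dependent coefficients and transfer the two conditions back and forth), but the specific change of variables you propose breaks down in exactly the generality the theorem is meant to cover. Setting $Y(t):=U_{-t}X(t)$ presupposes that the group acts on $H$ itself, i.e.\ that $(S_t)$ is the restriction of a group to an invariant subspace. Under the paper's Assumption on the dilation (the Sz.-Nagy setting, which is what makes the result applicable to pseudo-contractive semigroups), one only has the compression identity $S_t=\pi U_t\ell$, and $U_t\ell(H)\not\subset\ell(H)$ in general. Writing the mild equation through the dilation gives
\begin{align*}
\ell X(t)=\Pi_{{\rm rg}(\ell)}\,U_t\Big(\ell X(0)+\int_0^t U_{-s}\ell\,\alpha(s,X)\,ds+\int_0^t U_{-s}\ell\,\sigma(s,X)\,dW(s)\Big),
\end{align*}
so $U_{-t}\ell X(t)$ carries a projection sandwiched between $U_{-t}$ and $U_t$ and does \emph{not} solve the SDE; there is no deterministic, pathwise bijection $X\leftrightarrow Y$. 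The correct passage from $X$ to $Y$ is the stochastic-integral construction $Y=\ell X(0)+\int_0^{\bullet}U_{-s}\ell\,\alpha(s,X)\,ds+\int_0^{\bullet}U_{-s}\ell\,\sigma(s,X)\,dW(s)$, and the return map is $X=\Gamma(Y)$ with $\Gamma(w)=\pi U_{\bullet}\big(w-\Pi_{{\rm rg}(\ell)^{\perp}}w(0)\big)$, which is many-to-one because $\pi$ has a nontrivial kernel.

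This is not merely a bookkeeping issue: your coefficients $\bar\alpha(t,y)=U_{-t}\alpha\big(t,(U_s y(s))_s\big)$ are only defined on the constrained set of $\mathcal{H}$-valued paths with $U_s y(s)\in H$, and you acknowledge that the admissible solutions must be restricted accordingly. But a constrained equation of this kind is not within the scope of the SDE Yamada--Watanabe theorem you want to invoke, and the constraint also destroys its first hypothesis, namely existence of martingale solutions for \emph{every} initial law $\nu$ on $(\mathcal{H},\mathcal{B}(\mathcal{H}))$, not just laws concentrated on (the image of) $H$. The paper's definition $\bar\alpha(t,w)=U_{-t}\ell\,\alpha(t,\Gamma(w))$, $\bar\sigma(t,w)=U_{-t}\ell\,\sigma(t,\Gamma(w))$ removes both obstacles at once: the SDE is a genuine, unconstrained equation on $\mathbb{W}(\mathcal{H})$, and $\Gamma$ discards the ${\rm rg}(\ell)^{\perp}$-component of $Y(0)$, so that arbitrary initial laws $\nu$ on $\mathcal{H}$ can be handled (via an enlargement of the stochastic basis supplying an independent $\mathcal{F}_0$-measurable $\eta$ with law $\nu$). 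One then still has to prove the two-way solution-transfer lemmas, the transfer of pathwise uniqueness, and the measurability of the solution map $F(x,w)=\Gamma(G(\ell x,w))$ together with its $\mu$-versions $F_{\mu}=\Gamma\circ G_{\mu^{\ell}}(\ell\,\cdot,\cdot)$; none of this is available from a pointwise dictionary $X(t)=U_tY(t)$, which is simply false here since $U_tY(t)\notin H$ in general. Your argument would be correct if $A$ generated a group on $H$, but that is a much narrower statement than the theorem being proved.
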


The precise conditions on $A$, $\alpha$ and $\sigma$, under which Theorem \ref{thm-main} holds true, are stated in Assumptions~\ref{ass-1} and \ref{ass-2} below.
So far, the following two versions of the Yamada-Watanabe Theorem in infinite dimensions are known in the literature:
\begin{itemize}
\item For SPDEs of the type (\ref{SPDE}) with state-dependent coefficients $\alpha(t,X(t))$ and $\sigma(t,X(t))$; see \cite{Ondrejat}.

\item For stochastic evolution equations in the framework of the variational approach; see \cite{Roeckner}.
\end{itemize}
We will divide the proof of Theorem~\ref{thm-main} into two steps:
\begin{enumerate}
\item First, we show that we can reduce the proof to Hilbert space valued SDEs
\begin{align}\label{SDE}
dY_t = \bar{\alpha}(t,Y) dt + \bar{\sigma}(t,Y) dW_t.
\end{align}
This is due to the ``method of the moving frame'', which has been presented in \cite{SPDE}, see also \cite{Tappe-Refine}.

\item For Hilbert space valued SDEs (\ref{SDE}) however, the Yamada-Watanabe Theorem is a consequence of \cite{Roeckner}.
\end{enumerate}
The remainder of this paper is organized as follows: In Section~\ref{sec-framework} we present the general framework, in Section~\ref{sec-proof} we provide the proof of Theorem~\ref{thm-main}, and in Section~\ref{sec-example} we show an example illustrating Theorem~\ref{thm-main}.

\section{Framework and definitions}\label{sec-framework}

In this section, we prepare the required framework and definitions. The framework is similar to that in \cite{Roeckner} and we refer to this paper for further details.

Let $H$ be a separable Hilbert space and let $(S_t)_{t \geq 0}$ be a $C_0$-semigroup on $H$ with infinitesimal generator $A : \mathcal{D}(A) \subset H \rightarrow H$. The path space
\begin{align*}
\mathbb{W}(H) := C(\mathbb{R}_+;H)
\end{align*}
is the space of all continuous functions from $\mathbb{R}_+$ to $H$. Equipped with the metric
\begin{align}\label{metric}
\rho(w_1,w_2) := \sum_{k=1}^{\infty} 2^{-k} \Big( \sup_{t \in [0,k]} \| w_1(t) - w_2(t) \| \wedge 1 \Big),
\end{align}
the path space $(\mathbb{W}(H),\rho)$ is a Polish space. Furthermore, we define the subspace
\begin{align*}
\mathbb{W}_0(H) := \{ w \in \mathbb{W}(H) : w(0) = 0 \}
\end{align*}
consisting of all functions from the path space $\mathbb{W}(H)$ starting in zero. For $t \in \mathbb{R}_+$ we denote by
$\mathcal{B}_t(\mathbb{W}(H))$ the $\sigma$-algebra generated by all maps $\mathbb{W}(H) \rightarrow H$, $w \mapsto w(s)$ for $s \in [0,t]$.
Let $\mathcal{C}(H)$ be the collection of all cylinder sets of the form
\begin{align}\label{cyl-1}
\{ w \in \mathbb{W}(H) : w(t_1) \in B_1, \ldots, w(t_n) \in B_n \}
\end{align}
with $t_1,\ldots,t_n \in \mathbb{R}_+$ and $B_1,\ldots,B_n \in \mathcal{B}(H)$ for some $n \in \mathbb{N}$, and let $\mathcal{C}'(H)$ be the collection of all cylinder sets of the form
\begin{align}\label{cyl-2}
\{ w \in \mathbb{W}(H) : (w(t_1), \ldots, w(t_n)) \in B \}
\end{align}
for $t_1,\ldots,t_n \in \mathbb{R}_+$ and $B \in \mathcal{B}(H)^{\otimes n}$ for some $n \in \mathbb{N}$.
Similarly, for $t \in \mathbb{R}_+$ let $\mathcal{C}_t(H)$ be the collection of all cylinder sets of the form (\ref{cyl-1}) with $t_1,\ldots,t_n \in [0,t]$ and $B_1,\ldots,B_n \in \mathcal{B}(H)$ for some $n \in \mathbb{N}$, and let $\mathcal{C}_t'(H)$ be the collection of all cylinder sets of the form (\ref{cyl-2}) for $t_1,\ldots,t_n \in [0,t]$ and $B \in \mathcal{B}(H)^{\otimes n}$ for some $n \in \mathbb{N}$.

\begin{lemma}\label{lemma-cylinders}
The following statements are true:
\begin{enumerate}
\item We have $\mathcal{B}(\mathbb{W}(H)) = \sigma(\mathcal{C}(H)) = \sigma(\mathcal{C}'(H))$.

\item We have $\mathcal{B}_t(\mathbb{W}(H)) = \sigma(\mathcal{C}_t(H)) = \sigma(\mathcal{C}_t'(H))$ for each $t \in \mathbb{R}_+$.
\end{enumerate}
\end{lemma}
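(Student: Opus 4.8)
The plan is to prove both chains of equalities by checking the relevant inclusions in each direction; I would treat the ``global'' statement (1) in full and then obtain (2) by localizing the same arguments to the interval $[0,t]$, with one extra simplification available there.

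First I would dispose of the equality $\sigma(\mathcal{C}(H)) = \sigma(\mathcal{C}'(H))$. The inclusion $\mathcal{C}(H) \subseteq \mathcal{C}'(H)$ is clear, since a product $B_1 \times \cdots \times B_n$ lies in $\mathcal{B}(H)^{\otimes n}$, so $\sigma(\mathcal{C}(H)) \subseteq \sigma(\mathcal{C}'(H))$. For the converse, fix $t_1,\ldots,t_n \in \mathbb{R}_+$ and consider $\pi \colon \mathbb{W}(H) \to H^n$, $w \mapsto (w(t_1),\ldots,w(t_n))$. Each component $w \mapsto w(t_i)$ is $\sigma(\mathcal{C}(H))/\mathcal{B}(H)$-measurable because $\{ w : w(t_i) \in B \} \in \mathcal{C}(H)$ for all $B \in \mathcal{B}(H)$; hence $\pi$ is measurable from $\sigma(\mathcal{C}(H))$ to the product $\sigma$-algebra $\mathcal{B}(H)^{\otimes n}$. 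Since $H$ is separable, $\mathcal{B}(H)^{\otimes n} = \mathcal{B}(H^n)$, so $\pi^{-1}(B) \in \sigma(\mathcal{C}(H))$ for every $B \in \mathcal{B}(H)^{\otimes n}$, which shows $\mathcal{C}'(H) \subseteq \sigma(\mathcal{C}(H))$ and therefore $\sigma(\mathcal{C}'(H)) = \sigma(\mathcal{C}(H))$.

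It remains to show $\mathcal{B}(\mathbb{W}(H)) = \sigma(\mathcal{C}(H))$. The inclusion ``$\supseteq$'' is easy: for each $t$, if $w_n \to w$ in $\rho$ then $w_n(t) \to w(t)$ in $H$, so the evaluation $w \mapsto w(t)$ is continuous, hence Borel measurable, and every cylinder set belongs to $\mathcal{B}(\mathbb{W}(H))$. For ``$\subseteq$'', the key — and, I expect, the only genuinely technical — step is that the metric is $\sigma(\mathcal{C}(H))$-measurable in each argument: fixing $w_0 \in \mathbb{W}(H)$, continuity of paths gives
\[
\sup_{s \in [0,k]} \| w(s) - w_0(s) \| = \sup_{s \in [0,k] \cap \mathbb{Q}} \| w(s) - w_0(s) \|,
\]
a countable supremum of $\sigma(\mathcal{C}(H))$-measurable functions (each $w \mapsto \| w(s) - w_0(s) \|$ being the composition of the measurable evaluation with a continuous map), hence $\sigma(\mathcal{C}(H))$-measurable; summing over $k$ as in (\ref{metric}) shows $w \mapsto \rho(w, w_0)$ is $\sigma(\mathcal{C}(H))$-measurable, so every open ball $\{ w : \rho(w, w_0) < r \}$ lies in $\sigma(\mathcal{C}(H))$. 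Because $(\mathbb{W}(H),\rho)$ is separable, every open set is a countable union of open balls with centres in a fixed countable dense set and rational radii, hence lies in $\sigma(\mathcal{C}(H))$; this yields $\mathcal{B}(\mathbb{W}(H)) \subseteq \sigma(\mathcal{C}(H))$ and completes (1).

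For (2), I would first note that, by definition, $\mathcal{B}_t(\mathbb{W}(H))$ is generated by the maps $w \mapsto w(s)$ with $s \in [0,t]$; since $\{ w : w(s) \in B \} \in \mathcal{C}_t(H)$ and every element of $\mathcal{C}_t(H)$ is a finite intersection of such sets, this gives $\mathcal{B}_t(\mathbb{W}(H)) = \sigma(\mathcal{C}_t(H))$ at once, without needing the separability of $(\mathbb{W}(H),\rho)$. The equality $\sigma(\mathcal{C}_t(H)) = \sigma(\mathcal{C}_t'(H))$ then follows verbatim from the argument for $\mathcal{C}(H)$ and $\mathcal{C}'(H)$, again using $\mathcal{B}(H)^{\otimes n} = \mathcal{B}(H^n)$. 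Apart from the measurability of $\rho(\cdot,w_0)$, everything is routine bookkeeping with cylinder sets and the separability of $H$.
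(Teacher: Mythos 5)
Your proof is correct, and it is essentially the argument the paper has in mind: the paper's proof simply cites the finite-dimensional case in Shiryaev (Section 2.II), and your write-up is exactly that classical reasoning (continuity of evaluations for one inclusion; measurability of $\rho(\cdot,w_0)$ via countable suprema over rational times plus separability of $(\mathbb{W}(H),\rho)$ for the other, with the cylinder-set comparison handled by coordinate-wise measurability) carried over to a separable Hilbert space $H$. No gaps; the appeal to $\mathcal{B}(H)^{\otimes n}=\mathcal{B}(H^n)$ is not even needed since $\mathcal{C}'(H)$ is defined with the product $\sigma$-algebra.
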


\begin{proof}
We can argue as in the finite dimensional case, see e.g. \cite[Section 2.II]{Shiryaev}.
\end{proof}

Let $U$ be another separable Hilbert space and let $L_2(U,H)$ denote the space of all Hilbert-Schmidt operators from $U$ to $H$ equipped with the Hilbert-Schmidt norm.
Let $\alpha : \mathbb{R}_+ \times \mathbb{W}(H) \rightarrow H$ and $\sigma : \mathbb{R}_+ \times \mathbb{W}(H) \rightarrow L_2(U,H)$ be mappings.

\begin{assumption}\label{ass-1}
We suppose that the following conditions are satisfied:
\begin{enumerate}
\item $\alpha$ is $\mathcal{B}(\mathbb{R}_+) \otimes \mathcal{B}(\mathbb{W}(H)) / \mathcal{B}(H)$-measurable such that for each $t \in \mathbb{R}_+$ the mapping $\alpha(t,\bullet)$ is $\mathcal{B}_t(\mathbb{W}(H))/\mathcal{B}(H)$-measurable.

\item $\sigma$ is $\mathcal{B}(\mathbb{R}_+) \otimes \mathcal{B}(\mathbb{W}(H)) / \mathcal{B}(L_2(U,H))$-measurable such that for each $t \in \mathbb{R}_+$ the mapping $\sigma(t,\bullet)$ is $\mathcal{B}_t(\mathbb{W}(H))/\mathcal{B}(L_2(U,H))$-measurable.
\end{enumerate}
\end{assumption}

We call a filtered probability space $\mathbb{B} = (\Omega,\mathcal{F},(\mathcal{F}_t)_{t \geq 0},\mathbb{P})$ satisfying the usual conditions a \emph{stochastic basis}. In the sequel, we shall use the abbreviation $\mathbb{B}$ for a stochastic basis $(\Omega,\mathcal{F},(\mathcal{F}_t)_{t \geq 0},\mathbb{P})$, and the abbreviation $\mathbb{B}'$ for another stochastic basis $(\Omega',\mathcal{F}',(\mathcal{F}_t')_{t \geq 0},\mathbb{P}')$.
For a sequence $(\beta_k)_{k \in \mathbb{N}}$ of independent Wiener processes we call the sequence
\begin{align*}
W = (\beta_k)_{k \in \mathbb{N}}
\end{align*}
a \emph{standard $\mathbb{R}^{\infty}$-Wiener process}.

\begin{definition}\label{def-martingal-sol}
A pair $(X,W)$, where $X$ is an adapted process with paths in $\mathbb{W}(H)$ and $W$ is a standard $\mathbb{R}^{\infty}$-Wiener process on a stochastic basis $\mathbb{B}$ is called a \emph{martingale solution} to (\ref{SPDE}), if we have $\mathbb{P}$--almost surely
\begin{align*}
\int_0^t \| \alpha(s,X) \| ds + \int_0^t \| \sigma(s,X) \|_{L_2(U,H)}^2 ds < \infty \quad \text{for all $t \geq 0$}
\end{align*}
and $\mathbb{P}$--almost surely it holds
\begin{align*}
X(t) = S_t X(0) + \int_0^t S_{t-s} \alpha(s,X)ds + \int_0^t S_{t-s} \sigma(s,X) dW(s), \quad t \geq 0.
\end{align*}
\end{definition}

\begin{remark}
In finite dimensions, a pair $(X,W)$ as in Definition~\ref{def-martingal-sol} is called a \emph{weak solution}. As in \cite[Chapter 8]{Da_Prato}, we use the term \emph{martingale solution} in order to avoid ambiguities with the concept of a \emph{weak solution} to (\ref{SPDE}), which means that for each $\zeta \in \mathcal{D}(A^*)$ we have $\mathbb{P}$--almost surely
\begin{align*}
\langle \zeta,X(t) \rangle = \langle \zeta, X(0) \rangle + \int_0^t \big( \langle A^* \xi, X(s) \rangle + \langle \zeta, \alpha(s,X) \rangle \big) ds + \int_0^t \langle \zeta, \sigma(s,X) \rangle dW(s)
\end{align*}
for all $t \geq 0$. Sometimes, the latter concept is also called an \emph{analytically weak solution}, see \cite{Prevot-Roeckner}.
\end{remark}

\begin{remark}
By the measurability conditions from Assumption~\ref{ass-1}, the processes $\alpha(\bullet,X)$ and $\sigma(\bullet,X)$ from Definition~\ref{def-martingal-sol} are adapted.
\end{remark}

\begin{remark}
The stochastic integral from Definition~\ref{def-martingal-sol} is defined as
\begin{align*}
\int_0^t S_{t-s} \sigma(s,X) dW(s) := \int_0^t S_{t-s} \sigma(s,X) \circ J^{-1} d\bar{W}(s), \quad t \geq 0,
\end{align*}
where $J : U \rightarrow \bar{U}$ is a one-to-one Hilbert Schmidt operator into another Hilbert space $\bar{U}$, and
\begin{align*}
\bar{W} := \sum_{k=1}^{\infty} \beta_k J e_k,
\end{align*}
where $(e_k)_{k \in \mathbb{N}}$ denotes an orthonormal basis of $U$,
is an $\bar{U}$-valued trace class Wiener process with covariance operator $Q = J J^*$. Further details about this topic can be found in \cite[Section 2.5]{Prevot-Roeckner}.
\end{remark}

\begin{definition}
We say that \emph{weak uniqueness} holds for (\ref{SPDE}), if for two martingale solutions $(X,W)$ and $(X',W')$ on stochastic bases $\mathbb{B}$ and $\mathbb{B}'$ with
\begin{align*}
\mathbb{P}^{X(0)} = (\mathbb{P}')^{X'(0)}
\end{align*}
as measures on $(H,\mathcal{B}(H))$, we have
\begin{align*}
\mathbb{P}^{X} = (\mathbb{P'})^{X'}
\end{align*}
as measures on $(\mathbb{W}(H),\mathcal{B}(\mathbb{W}(H)))$.
\end{definition}

\begin{definition}
We say that \emph{pathwise uniqueness} holds for (\ref{SPDE}), if for two martingale solutions $(X,W)$ and $(X',W)$ on the same stochastic basis $\mathbb{B}$ and with the same $\mathbb{R}^{\infty}$-Wiener process $W$ such that $\mathbb{P}(X(0) = X'(0)) = 1$ we have $X = X'$ up to indistinguishability.
\end{definition}

\begin{definition}
Let $\hat{\mathcal{E}}(H)$ be the set of maps $F : H \times \mathbb{W}_0(\bar{U}) \rightarrow \mathbb{W}(H)$ such that for every probability measure $\mu$ on $(H,\mathcal{B}(H))$ there exists a map
\begin{align*}
F_{\mu} : H \times \mathbb{W}_0(\bar{U}) \rightarrow \mathbb{W}(H),
\end{align*}
which is $\overline{\mathcal{B}(H) \otimes \mathcal{B}(\mathbb{W}_0(\bar{U}))}^{\mu \otimes \mathbb{P}^Q} / \mathcal{B}(\mathbb{W}(H))$-measurable, such that for $\mu$--almost all $x \in H$ we have
\begin{align*}
F(x,w) = F_{\mu}(x,w) \quad \text{for $\mathbb{P}^Q$--almost all $w \in \mathbb{W}_0(\bar{U})$.}
\end{align*}
Here $\overline{\mathcal{B}(H) \otimes \mathcal{B}(\mathbb{W}_0(\bar{U}))}^{\mu \otimes \mathbb{P}^Q}$ denotes the completion of $\mathcal{B}(H) \otimes \mathcal{B}(\mathbb{W}_0(\bar{U}))$ with respect to $\mu \otimes \mathbb{P}^Q$, and $\mathbb{P}^Q$ denotes the distribution of the $Q$-Wiener process $\bar{W}$ on $(\mathbb{W}_0(\bar{U}),\mathcal{B}(\mathbb{W}_0(\bar{U})))$. Of course, $F_{\mu}$ is $\mu \otimes \mathbb{P}^Q$--almost everywhere uniquely determined.
\end{definition}

\begin{definition}\label{def-mild-sol}
A martingale solution $(X,W)$ to (\ref{SPDE}) on a stochastic basis $\mathbb{B}$ is called a \emph{mild solution} if there exists a mapping $F \in \hat{\mathcal{E}}(H)$ such that the following conditions are satisfied:
\begin{enumerate}
\item  For all $x \in H$ and $t \in \mathbb{R}_+$ the mapping 
\begin{align*}
\mathbb{W}_0(\bar{U}) \rightarrow \mathbb{W}(H), \quad w \mapsto F(x,w)
\end{align*}
is $\overline{\mathcal{B}_t(\mathbb{W}_0(\bar{U}))}^{\mathbb{P}^Q} / \mathcal{B}_t(\mathbb{W}(H))$-measurable, where $\overline{\mathcal{B}_t(\mathbb{W}_0(\bar{U}))}^{\mathbb{P}^Q}$ denotes the completion with respect to $\mathbb{P}^Q$ in $\mathcal{B}(\mathbb{W}_0(\bar{U}))$.

\item  We have up to indistinguishability
\begin{align*}
X = F_{\mathbb{P}^{X(0)}} (X(0),\bar{W}).
\end{align*}
\end{enumerate}
\end{definition}

\begin{definition}\label{def-unique-sol}
We say that the SPDE (\ref{SPDE}) has a \emph{unique mild solution} if there exists a mapping $F \in \hat{\mathcal{E}}(H)$ such that:
\begin{enumerate}
\item  For all $x \in H$ and $t \in \mathbb{R}_+$ the mapping 
\begin{align*}
\mathbb{W}_0(\bar{U}) \rightarrow \mathbb{W}(H), \quad w \mapsto F(x,w) 
\end{align*}
is $\overline{\mathcal{B}_t(\mathbb{W}_0(\bar{U}))}^{\mathbb{P}^Q} / \mathcal{B}_t(\mathbb{W}(H))$-measurable, where $\overline{\mathcal{B}_t(\mathbb{W}_0(\bar{U}))}^{\mathbb{P}^Q}$ denotes the completion with respect to $\mathbb{P}^Q$ in $\mathcal{B}(\mathbb{W}_0(\bar{U}))$.

\item For every standard $\mathbb{R}^{\infty}$-Wiener process $W$ on a stochastic basis $\mathbb{B}$ and any $\mathcal{F}_0$-measurable random variable $\xi : \Omega \rightarrow H$ the pair $(X,W)$, where $X := F(\xi,\bar{W})$, is a martingale solution to (\ref{SPDE}) with $\mathbb{P}(X(0) = \xi) = 1$.

\item For any martingale solution $(X,W)$ to (\ref{SPDE}) we have up to indistinguishability
\begin{align*}
X = F_{\mathbb{P}^{X(0)}}(X(0),\bar{W}).
\end{align*}
\end{enumerate}
\end{definition}

\begin{remark}
For $A = 0$ the SPDE (\ref{SPDE}) becomes a SDE, and in this case we speak about a \emph{strong solution (unique strong solution)}, if the conditions from Definition~\ref{def-mild-sol} (Definition~\ref{def-unique-sol}) are fulfilled.
\end{remark}

\section{Proof of Theorem \ref{thm-main}}\label{sec-proof}

In this section, we shall provide the proof of Theorem~\ref{thm-main}. The general framework is that of Section~\ref{sec-framework}. In particular, we suppose that the coefficients $\alpha$ and $\sigma$ satisfy Assumption~\ref{ass-1}. As mentioned in Section~\ref{sec-intro}, we shall utilize the ``method of the moving frame'' from \cite{SPDE}. For this, we require the following assumption on the semigroup $(S_t)_{t \geq 0}$.

\begin{assumption}\label{ass-2}
We suppose that there exist another separable Hilbert space $\mathcal{H}$, a
$C_0$-group $(U_t)_{t \in \mathbb{R}}$ on $\mathcal{H}$ and
continuous linear operators $\ell \in L(H,\mathcal{H})$, $\pi \in
L(\mathcal{H},H)$ such $\ell$ is injective, we have ${\rm rg}(\pi) = H$ and ${\rm ker}(\pi) = {\rm rg} (\ell)^{\perp}$, and the diagram
\[ \begin{CD}
\mathcal{H} @>U_t>> \mathcal{H}\\
@AA\ell A @VV\pi V\\
H @>S_t>> H
\end{CD} \]
commutes for every $t \in \mathbb{R}_+$, that is
\begin{align}\label{diagram-commutes}
\pi U_t \ell = S_t \quad \text{for all $t \in \mathbb{R}_+$.}
\end{align}
\end{assumption}

\begin{remark}
According to \cite[Prop. 8.7]{SPDE}, this assumption is satisfied if the semigroup $(S_t)_{t \geq 0}$ is \emph{pseudo-contractive} (one also uses the notion \emph{quasi-contractive}), that is, there is a constant $\omega \in \mathbb{R}$ such that
\begin{align*}
\| S_t \| \leq e^{\omega t} \quad \text{for all $t \geq 0$.}
\end{align*}
This result relies on the Sz\H{o}kefalvi-Nagy theorem on unitary dilations (see e.g. \cite[Thm. I.8.1]{Nagy}, or \cite[Sec. 7.2]{Davies}). In the spirit of \cite{Nagy}, the group $(U_t)_{t \in \mathbb{R}}$ is called a \emph{dilation} of the semigroup $(S_t)_{t \geq 0}$.
\end{remark}

\begin{remark}
The Sz\H{o}kefalvi-Nagy theorem was also utilized in \cite{Seidler, Seidler2} in order to establish results concerning stochastic convolution integrals.
\end{remark}

In the sequel, for some closed subspace $K \subset H$ we denote by $\Pi_K$ the orthogonal projection on $K$.

\begin{lemma}\label{lemma-orth}
The following statements are true:
\begin{enumerate}
\item We have $\pi \ell = {\rm Id}|_H$.

\item We have $\ell \pi = \Pi_{{\rm rg}(\ell)}$ and $\ell \pi|_{{\rm rg}(\ell)} = {\rm Id}|_{{\rm rg}(\ell)}$.
\end{enumerate}
\end{lemma}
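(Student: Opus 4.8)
The plan is to prove the two statements directly from the structural properties of $\ell$ and $\pi$ imposed in Assumption~\ref{ass-2}: namely, that $\ell$ is injective, $\pi$ is surjective with $\mathrm{ker}(\pi) = \mathrm{rg}(\ell)^{\perp}$, together with the commuting diagram evaluated at $t=0$, which gives $\pi\ell = \pi U_0 \ell = S_0 = \mathrm{Id}|_H$. This last identity is essentially statement (1), so the real content is statement (2).

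\medskip

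For statement (1), I would simply set $t = 0$ in \eqref{diagram-commutes}: since $U_0 = \mathrm{Id}_{\mathcal{H}}$ (as $(U_t)_{t\in\mathbb{R}}$ is a $C_0$-group) and $S_0 = \mathrm{Id}_H$, we get $\pi\ell = \mathrm{Id}|_H$ at once. This is one line.

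\medskip

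For statement (2), first observe from (1) that $(\ell\pi)^2 = \ell(\pi\ell)\pi = \ell\pi$, so $\ell\pi$ is idempotent. It is bounded, so to identify it with the orthogonal projection $\Pi_{\mathrm{rg}(\ell)}$ it suffices to check that it is self-adjoint (or, equivalently, that its range is $\mathrm{rg}(\ell)$ and its kernel is $\mathrm{rg}(\ell)^{\perp}$). I would proceed with the range/kernel description. The range of $\ell\pi$ is contained in $\mathrm{rg}(\ell)$; conversely, if $y = \ell x \in \mathrm{rg}(\ell)$, then $\ell\pi y = \ell(\pi\ell)x = \ell x = y$ by (1), so $\mathrm{rg}(\ell\pi) = \mathrm{rg}(\ell)$ and moreover $\ell\pi$ acts as the identity on $\mathrm{rg}(\ell)$, which is the second assertion of (2). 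For the kernel: $\ell\pi h = 0$ iff $\pi h \in \mathrm{ker}(\ell)$ iff $\pi h = 0$ (since $\ell$ is injective) iff $h \in \mathrm{ker}(\pi) = \mathrm{rg}(\ell)^{\perp}$. So $\ell\pi$ is the bounded idempotent with range $\mathrm{rg}(\ell)$ and kernel $\mathrm{rg}(\ell)^{\perp}$, hence it equals $\Pi_{\mathrm{rg}(\ell)}$. One subtlety worth noting: $\mathrm{rg}(\ell)$ need not a priori be closed, but the identity $\mathrm{ker}(\pi) = \mathrm{rg}(\ell)^{\perp}$ forces $\mathrm{rg}(\ell)^{\perp\perp} = \overline{\mathrm{rg}(\ell)}$ to be the orthogonal complement of the closed subspace $\mathrm{ker}(\pi)$, and since $\ell\pi$ is a bounded projection onto $\mathrm{rg}(\ell)$ with closed kernel, its range is in fact closed; thus $\mathrm{rg}(\ell)$ is closed and $\Pi_{\mathrm{rg}(\ell)}$ makes sense as written.

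\medskip

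I expect no serious obstacle here: this is a routine functional-analytic argument. The only point requiring a little care is the closedness of $\mathrm{rg}(\ell)$, which is needed for the notation $\Pi_{\mathrm{rg}(\ell)}$ to be literally meaningful; this follows from the fact that a bounded idempotent with closed kernel has closed range (its range being the kernel of $\mathrm{Id} - \ell\pi$), so I would either include that one-line remark or simply observe that the computations above show $\mathrm{rg}(\ell) = \mathrm{rg}(\ell\pi) = \mathrm{ker}(\mathrm{Id}_H - \ell\pi)$ is closed.
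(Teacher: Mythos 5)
Your proof is correct and follows essentially the same route as the paper: statement (1) by setting $t=0$ in \eqref{diagram-commutes}, and statement (2) by identifying $\ell\pi$ as the orthogonal projection through its range ${\rm rg}(\ell)$ and kernel ${\rm ker}(\pi)={\rm rg}(\ell)^{\perp}$. Your additional observations --- the idempotence $(\ell\pi)^2=\ell\pi$ and the closedness of ${\rm rg}(\ell)$ via ${\rm rg}(\ell)={\rm ker}({\rm Id}_{\mathcal{H}}-\ell\pi)$ --- are in fact slightly more careful than the paper's terse justification (which only invokes injectivity of $\ell$, implicitly relying on the bounded left inverse $\pi$), so no changes are needed.
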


\begin{proof}
The first statement follows from (\ref{diagram-commutes}) with $t = 0$. For the second statement, note that ${\rm rg}(\ell)$ is closed, because $\ell$ is injective. Moreover, by Assumption~\ref{ass-2} we have ${\rm rg}(\ell \pi) = {\rm rg}(\ell)$ and ${\rm ker}(\ell \pi) = {\rm ker}(\pi) = {\rm rg}(\ell)^{\perp}$, showing that $\ell \pi$ is the orthogonal projection on the closed subspace ${\rm rg}(\ell)$. Consequently, we also have $\ell \pi|_{{\rm rg}(\ell)} = {\rm Id}|_{{\rm rg}(\ell)}$.
\end{proof}

Now, we introduce several mappings, namely
\begin{equation}\label{def-mappings}
\begin{aligned}
&\Gamma : \mathbb{W}(\mathcal{H}) \rightarrow \mathbb{W}(H), \quad \Gamma(w) := \pi U (w - \Pi_{{\rm rg}(\ell)^{\perp}} w(0)),
\\ &a : \mathbb{R}_+ \times \mathbb{W}(H) \rightarrow \mathcal{H}, \quad a(t,w) := U_{-t} \ell \alpha(t,w),
\\ &b : \mathbb{R}_+ \times \mathbb{W}(H) \rightarrow L_2(U,\mathcal{H}), \quad b(t,w) := U_{-t} \ell \sigma(t,w),
\\ &\bar{\alpha} : \mathbb{R}_+ \times \mathbb{W}(\mathcal{H}) \rightarrow \mathcal{H}, \quad \bar{\alpha}(t,w) := a(t,\Gamma(w)),
\\ &\bar{\sigma} : \mathbb{R}_+ \times \mathbb{W}(\mathcal{H}) \rightarrow \mathcal{H}, \quad \bar{\sigma}(t,w) := b(t,\Gamma(w)).
\end{aligned}
\end{equation}

\begin{lemma}\label{lemma-Gamma-meas}
The following statements are true:
\begin{enumerate}
\item The mapping $\Gamma$ is $\mathcal{B}(\mathbb{W}(\mathcal{H})) / \mathcal{B}(\mathbb{W}(H))$-measurable.

\item The mapping $\Gamma$ is $\mathcal{B}_t(\mathbb{W}(\mathcal{H})) / \mathcal{B}_t(\mathbb{W}(H))$-measurable for each $t \in \mathbb{R}_+$.
\end{enumerate}
\end{lemma}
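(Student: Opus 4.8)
The plan is to decompose $\Gamma$ into a composition of simpler maps and check measurability of each factor. Recall that $\Gamma(w) = \pi U(w - \Pi_{{\rm rg}(\ell)^\perp} w(0))$, where $U$ acts pathwise as $(Uw)(t) = U_t w(t)$. I would write $\Gamma = \Phi_3 \circ \Phi_2 \circ \Phi_1$, where $\Phi_1 : \mathbb{W}(\mathcal{H}) \to \mathbb{W}(\mathcal{H})$ is the affine shift $w \mapsto w - \Pi_{{\rm rg}(\ell)^\perp} w(0)$, then $\Phi_2 : \mathbb{W}(\mathcal{H}) \to \mathbb{W}(\mathcal{H})$ is the pointwise application of the group, $(\Phi_2 w)(t) = U_t w(t)$, and finally $\Phi_3 : \mathbb{W}(\mathcal{H}) \to \mathbb{W}(H)$ is the pointwise application of $\pi$, $(\Phi_3 w)(t) = \pi w(t)$. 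Each of these is continuous from $(\mathbb{W}(\mathcal{H}),\rho)$ to the appropriate path space: $\Phi_1$ because evaluation at $0$ is continuous and $\Pi_{{\rm rg}(\ell)^\perp}$ is bounded; $\Phi_3$ because $\pi \in L(\mathcal{H},H)$ is bounded; and $\Phi_2$ because on each compact interval $[0,k]$ the map $(t,x) \mapsto U_t x$ is jointly continuous (strong continuity of the $C_0$-group together with local boundedness of $\|U_t\|$), so uniform convergence of $w_n \to w$ on $[0,k]$ yields uniform convergence of $U_\bullet w_n(\bullet) \to U_\bullet w(\bullet)$ on $[0,k]$. Hence $\Gamma$ is continuous, therefore Borel measurable, which gives (i).

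For part (ii) I would argue via the cylinder-set description of the filtration from Lemma \ref{lemma-cylinders}(ii): it suffices to show that for each $s \in [0,t]$ the evaluation $\mathbb{W}(\mathcal{H}) \to H$, $w \mapsto \Gamma(w)(s)$, is $\mathcal{B}_t(\mathbb{W}(\mathcal{H}))/\mathcal{B}(H)$-measurable, since then the $\sigma$-algebra generated by $\Gamma$ and the cylinder sets of the form (\ref{cyl-1}) with times in $[0,t]$ is contained in $\mathcal{B}_t(\mathbb{W}(\mathcal{H}))$, and this generates $\mathcal{B}_t(\mathbb{W}(H))$ pulled back through $\Gamma$. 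Explicitly, $\Gamma(w)(s) = \pi U_s\big(w(s) - \Pi_{{\rm rg}(\ell)^\perp} w(0)\big)$, which is a fixed continuous (indeed bounded linear) function of the pair $(w(0),w(s)) \in \mathcal{H} \times \mathcal{H}$; since both coordinate maps $w \mapsto w(0)$ and $w \mapsto w(s)$ are $\mathcal{B}_t(\mathbb{W}(\mathcal{H}))/\mathcal{B}(\mathcal{H})$-measurable for $s \le t$, the composition is $\mathcal{B}_t(\mathbb{W}(\mathcal{H}))/\mathcal{B}(H)$-measurable as claimed.

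The only mild subtlety — and the step I would be most careful about — is the joint continuity $(t,x) \mapsto U_t x$ used for $\Phi_2$: strong continuity of the group gives continuity in $x$ uniformly on compacts and separate continuity in $t$, and the uniform boundedness principle gives $\sup_{t\in[0,k]}\|U_t\| < \infty$; combining these yields joint continuity and the required uniform-on-compacts estimate $\sup_{s\le k}\|U_s w_n(s) - U_s w(s)\| \le \sup_{s\le k}\|U_s\|\cdot\sup_{s\le k}\|w_n(s)-w(s)\| + \sup_{s\le k}\|U_s w(s) - U_s w_n(s)\|$, where the first term vanishes by the convergence $w_n \to w$ and the second by equicontinuity of $\{U_s w : s \in [0,k]\}$-type arguments. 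Everything else is routine. For (ii) no new continuity input beyond boundedness of $\pi$, $U_s$ and $\Pi_{{\rm rg}(\ell)^\perp}$ is needed, since we only evaluate at fixed times.
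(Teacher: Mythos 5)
Your proof is correct, but part (i) goes by a different route than the paper. The paper handles both statements with one cylinder-set computation: for $C=\{w: w(t_1)\in B_1,\dots,w(t_n)\in B_n\}$ it writes out $\Gamma^{-1}(C)$ explicitly as a cylinder set over the times $0,t_1,\dots,t_n$ in $\mathbb{W}(\mathcal{H})$ and invokes Lemma~\ref{lemma-cylinders}; restricting the times to $[0,t]$ gives (ii) by the identical computation. You instead prove (i) by showing $\Gamma$ is continuous on $(\mathbb{W}(\mathcal{H}),\rho)$, factoring it through the shift $w\mapsto w-\Pi_{{\rm rg}(\ell)^{\perp}}w(0)$, the pathwise action of the group and the pathwise action of $\pi$; this yields a stronger conclusion (continuity, not just Borel measurability), at the cost of the extra analytic input you correctly flag, namely joint continuity of $(t,x)\mapsto U_tx$ and the local bound $\sup_{s\in[0,k]}\|U_s\|<\infty$, neither of which the paper's pointwise argument needs. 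For (ii) you revert to essentially the paper's mechanism: since $\Gamma(w)(s)=\pi U_s\big(w(s)-\Pi_{{\rm rg}(\ell)^{\perp}}w(0)\big)$ depends only on $w(0)$ and $w(s)$, pulling back the evaluation maps (equivalently the cylinder sets of Lemma~\ref{lemma-cylinders}(ii)) at times $s\le t$ gives the $\mathcal{B}_t$-measurability — and this switch is necessary, since continuity says nothing about the filtration. One cosmetic flaw: in your displayed estimate for $\Phi_2$ the second summand $\sup_{s\le k}\|U_sw(s)-U_sw_n(s)\|$ is the same quantity as the left-hand side, so the inequality as written is circular; it is also superfluous, because linearity gives $\sup_{s\le k}\|U_s(w_n(s)-w(s))\|\le \sup_{s\in[0,k]}\|U_s\|\cdot\sup_{s\le k}\|w_n(s)-w(s)\|$ directly, so your argument stands after deleting that term.
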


\begin{proof}
Let $C \in \mathcal{C}(H)$ be a cylinder set of the form
\begin{align*}
C = \{ w \in \mathbb{W}(H) : w(t_1) \in B_1, \ldots, w(t_n) \in B_n \}
\end{align*}
with $t_1,\ldots,t_n \in \mathbb{R}_+$ and $B_1,\ldots,B_n \in \mathcal{B}(H)$ for some $n \in \mathbb{N}$. Then we have
\begin{align*}
\Gamma^{-1}(C) = \bigcap_{k=1}^n \{ w \in \mathbb{W}(\mathcal{H}) : w(t_k) - \Pi_{{\rm rg}(\ell)^{\perp}} w(0) \in (\pi U_t)^{-1}(B_k) \} \in \mathcal{C}'(H).
\end{align*}
By Lemma~\ref{lemma-cylinders}, the mapping $\Gamma$ is $\mathcal{B}_t(\mathbb{W}(\mathcal{H})) / \mathcal{B}_t(\mathbb{W}(H))$-measurable, showing the first statement. The second statement is proven analogously.
\end{proof}

\begin{lemma}\label{lemma-meas}
The following statements are true:
\begin{enumerate}
\item $\bar{\alpha}$ is $\mathcal{B}(\mathbb{R}_+) \otimes \mathcal{B}(\mathbb{W}(\mathcal{H})) / \mathcal{B}(\mathcal{H})$-measurable and for each $t \in \mathbb{R}_+$ the mapping $\bar{\alpha}(t,\bullet)$ is $\mathcal{B}_t(\mathbb{W}(\mathcal{H}))/\mathcal{B}(\mathcal{H})$-measurable.

\item $\bar{\sigma}$ is $\mathcal{B}(\mathbb{R}_+) \otimes \mathcal{B}(\mathbb{W}(\mathcal{H})) / \mathcal{B}(L_2(U,\mathcal{H}))$-measurable and for each $t \in \mathbb{R}_+$ the mapping $\bar{\sigma}(t,\bullet)$ is $\mathcal{B}_t(\mathbb{W}(\mathcal{H}))/\mathcal{B}(L_2(U,\mathcal{H}))$-measurable.
\end{enumerate}
\end{lemma}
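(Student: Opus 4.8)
The plan is to reduce everything to the measurability statements already established, namely Assumption~\ref{ass-1} for $\alpha$ and $\sigma$, Lemma~\ref{lemma-Gamma-meas} for $\Gamma$, and the continuity of the group operators $U_{-t}$ together with the boundedness of $\ell$. I would treat $\bar\alpha$ in detail and then remark that $\bar\sigma$ is analogous. Recall from (\ref{def-mappings}) that $\bar\alpha(t,w) = a(t,\Gamma(w)) = U_{-t}\ell\,\alpha(t,\Gamma(w))$, so it suffices to analyse the maps $a$ and $b$ first and then compose with $\Gamma$.

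First I would show that $a : \mathbb{R}_+ \times \mathbb{W}(H) \to \mathcal{H}$ is jointly $\mathcal{B}(\mathbb{R}_+) \otimes \mathcal{B}(\mathbb{W}(H))/\mathcal{B}(\mathcal{H})$-measurable. By Assumption~\ref{ass-1}(1), $\alpha$ is jointly measurable, and the map $H \to \mathcal{H}$, $h \mapsto \ell h$ is continuous, hence measurable, so $(t,w) \mapsto \ell\,\alpha(t,w)$ is jointly measurable. It then remains to compose with the map $(t,h) \mapsto U_{-t} h$; this map $\mathbb{R}_+ \times \mathcal{H} \to \mathcal{H}$ is continuous (strong continuity of the $C_0$-group together with the fact that on bounded time intervals the operator norms $\|U_{-t}\|$ are locally bounded makes $(t,h)\mapsto U_{-t}h$ jointly continuous), hence Borel measurable, and composing a jointly measurable $\mathbb{R}_+\times\mathbb{W}(H)\to\mathbb{R}_+\times\mathcal{H}$ map (namely $(t,w)\mapsto(t,\ell\alpha(t,w))$) with it yields joint measurability of $a$. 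The same argument with $\sigma$ in place of $\alpha$, using that $T \mapsto \ell T$ is a bounded linear operator $L_2(U,H)\to L_2(U,\mathcal{H})$ and that $T \mapsto U_{-t}T$ is continuous on $L_2(U,\mathcal{H})$ with locally bounded operator norm, gives joint measurability of $b : \mathbb{R}_+\times\mathbb{W}(H)\to L_2(U,\mathcal{H})$. For the adaptedness part, fix $t \in \mathbb{R}_+$: then $\alpha(t,\bullet)$ is $\mathcal{B}_t(\mathbb{W}(H))/\mathcal{B}(H)$-measurable by Assumption~\ref{ass-1}(1), and since $U_{-t}\ell$ is a fixed bounded operator $H \to \mathcal{H}$, the composition $a(t,\bullet) = U_{-t}\ell\,\alpha(t,\bullet)$ is $\mathcal{B}_t(\mathbb{W}(H))/\mathcal{B}(\mathcal{H})$-measurable; likewise for $b(t,\bullet)$.

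Next I would pass to $\bar\alpha$ and $\bar\sigma$ via $\Gamma$. For joint measurability, $(t,w) \mapsto (t,\Gamma(w))$ is $\mathcal{B}(\mathbb{R}_+)\otimes\mathcal{B}(\mathbb{W}(\mathcal{H}))/\mathcal{B}(\mathbb{R}_+)\otimes\mathcal{B}(\mathbb{W}(H))$-measurable by Lemma~\ref{lemma-Gamma-meas}(1), and composing with the jointly measurable $a$ (resp. $b$) gives that $\bar\alpha(t,w) = a(t,\Gamma(w))$ and $\bar\sigma(t,w) = b(t,\Gamma(w))$ are jointly measurable with the asserted target $\sigma$-algebras. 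For the adaptedness, fix $t \in \mathbb{R}_+$: by Lemma~\ref{lemma-Gamma-meas}(2) the map $\Gamma$ is $\mathcal{B}_t(\mathbb{W}(\mathcal{H}))/\mathcal{B}_t(\mathbb{W}(H))$-measurable, and by the previous paragraph $a(t,\bullet)$ is $\mathcal{B}_t(\mathbb{W}(H))/\mathcal{B}(\mathcal{H})$-measurable, so the composition $\bar\alpha(t,\bullet) = a(t,\Gamma(\bullet))$ is $\mathcal{B}_t(\mathbb{W}(\mathcal{H}))/\mathcal{B}(\mathcal{H})$-measurable; the same reasoning applies to $\bar\sigma(t,\bullet)$.

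I do not expect a serious obstacle here; the one point requiring a little care is the joint (in $(t,h)$ or $(t,T)$) measurability of the action of $U_{-t}$, since a priori strong continuity of a $C_0$-group gives only separate continuity. The clean way around this is to observe that on each compact interval $[0,k]$ the operator norms are uniformly bounded by the uniform boundedness principle, and then for fixed $h$ the map $t \mapsto U_{-t}h$ is continuous while for fixed $t$ the map $h \mapsto U_{-t}h$ is linear and bounded; an elementary estimate $\|U_{-s}h' - U_{-t}h\| \le \|U_{-s}(h'-h)\| + \|U_{-s}h - U_{-t}h\|$ then yields joint continuity, hence joint Borel measurability, which is all that is needed. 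The Hilbert--Schmidt version is identical, using that $\|U_{-t}T\|_{L_2(U,\mathcal{H})} \le \|U_{-t}\|\,\|T\|_{L_2(U,\mathcal{H})}$. All other steps are routine compositions of measurable maps.
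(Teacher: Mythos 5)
Your proposal is correct and follows essentially the same route as the paper: the paper's proof simply notes that $(t,h) \mapsto U_{-t}\ell h$ is jointly continuous, hence measurable, and then concludes by composition using Assumption~\ref{ass-1} and Lemma~\ref{lemma-Gamma-meas}. You merely spell out the details the paper leaves implicit (the splitting into $a$, $b$ and $\Gamma$, the local boundedness argument for joint continuity, and the analogous Hilbert--Schmidt statement needed for $\bar\sigma$), all of which is consistent with the paper's argument.
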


\begin{proof}
Note that the mapping
\begin{align*}
\mathbb{R}_+ \times H \rightarrow \mathcal{H}, \quad (t,h) \mapsto U_{-t} \ell h
\end{align*}
is continuous, and hence $\mathcal{B}(\mathbb{R}_+) \otimes \mathcal{B}(H) / \mathcal{B}(\mathcal{H})$-measurable. Therefore, the claimed measurability properties of $\bar{\alpha}$ and $\bar{\sigma}$ follow from Lemma~\ref{lemma-Gamma-meas} and Assumption~\ref{ass-1}.
\end{proof}

By virtue of Lemma~\ref{lemma-meas}, we may apply the Yamada-Watanabe Theorem from \cite{Roeckner}, and obtain:

\begin{theorem}\label{thm-main-SDE}
The SDE (\ref{SDE}) has a unique strong solution if and only if both of the following two conditions are satisfied:
\begin{enumerate}
\item For each probability measure $\nu$ on $(\mathcal{H},\mathcal{B}(\mathcal{H}))$ there exists a martingale solution $(Y,W)$ to (\ref{SDE}) such that $\nu$ is the distribution of $Y(0)$.

\item Pathwise uniqueness for (\ref{SDE}) holds.
\end{enumerate}
\end{theorem}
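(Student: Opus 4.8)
The plan is to obtain Theorem~\ref{thm-main-SDE} as a direct application of the Yamada--Watanabe theorem for stochastic evolution equations proved in \cite{Roeckner}. The conceptual point is that the Hilbert space valued SDE (\ref{SDE}) posed in $\mathcal{H}$ is nothing but a stochastic evolution equation in the variational framework of \cite{Roeckner} for the \emph{trivial} Gelfand triple $\mathcal{H} \hookrightarrow \mathcal{H} \hookrightarrow \mathcal{H}$, i.e. with vanishing (unbounded) linear part. Under this specialization the notions of martingale solution, pathwise uniqueness and unique strong solution used here (Definitions~\ref{def-martingal-sol}--\ref{def-unique-sol} read with $A = 0$, so that ``mild'' becomes ``strong'') coincide with the corresponding notions in \cite{Roeckner}, and solutions have continuous $\mathcal{H}$-valued paths, i.e. live in $\mathbb{W}(\mathcal{H})$.

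First I would check that the coefficients $\bar{\alpha}$ and $\bar{\sigma}$ fit the hypotheses of \cite{Roeckner}. The required joint and sectional measurability has already been established in Lemma~\ref{lemma-meas}: $\bar{\alpha}$ and $\bar{\sigma}$ are jointly measurable and, for each $t \in \mathbb{R}_+$, the maps $\bar{\alpha}(t,\bullet)$ and $\bar{\sigma}(t,\bullet)$ are $\mathcal{B}_t(\mathbb{W}(\mathcal{H}))$-measurable, which is precisely the path-dependent adaptedness condition under which the Yamada--Watanabe theorem of \cite{Roeckner} is formulated. No coercivity, monotonicity or growth assumptions are needed, because the Yamada--Watanabe theorem is a pure equivalence statement --- existence of martingale solutions for every initial law together with pathwise uniqueness is equivalent to existence of a unique strong solution --- and does not presuppose well-posedness of (\ref{SDE}).

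Second, I would carefully translate the data. The driving noise here is a standard $\mathbb{R}^{\infty}$-Wiener process $W = (\beta_k)_{k \in \mathbb{N}}$, which via the fixed Hilbert--Schmidt embedding $J : U \to \bar{U}$ is identified with the $\bar{U}$-valued $Q$-Wiener process $\bar{W}$ with $Q = JJ^*$; its path space is $(\mathbb{W}_0(\bar{U}),\mathcal{B}(\mathbb{W}_0(\bar{U})))$ with law $\mathbb{P}^Q$, exactly as in the definition of $\hat{\mathcal{E}}(\mathcal{H})$ and of (unique) strong solutions. With this identification the ``input--output'' map $F \in \hat{\mathcal{E}}(\mathcal{H})$, the $t$-wise completed $\sigma$-algebras $\overline{\mathcal{B}_t(\mathbb{W}_0(\bar{U}))}^{\mathbb{P}^Q}$, and the measurability and adaptedness requirements in Definition~\ref{def-unique-sol} match verbatim those in \cite{Roeckner}. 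Invoking \cite{Roeckner} for the equation (\ref{SDE}) then yields Theorem~\ref{thm-main-SDE}.

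The main obstacle I anticipate is not mathematical depth but bookkeeping: one has to make sure that every notion used here --- martingale solution, pathwise uniqueness, the class $\hat{\mathcal{E}}$, the completed filtrations, and ``unique strong solution'' --- is literally the specialization to the trivial Gelfand triple of the corresponding object in \cite{Roeckner} once $J$ and $\bar{U}$ are fixed, and that the genuine path-dependence of $\bar{\alpha}$ and $\bar{\sigma}$ is admitted there. Once this dictionary is set up there is nothing further to prove.
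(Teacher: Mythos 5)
Your proposal matches the paper's own treatment: the paper proves Theorem~\ref{thm-main-SDE} simply by noting that, thanks to the measurability properties of $\bar{\alpha}$ and $\bar{\sigma}$ established in Lemma~\ref{lemma-meas}, the Yamada--Watanabe theorem of \cite{Roeckner} applies directly to the SDE (\ref{SDE}). Your additional remarks on the dictionary (trivial Gelfand triple, identification of $W$ with the $Q$-Wiener process $\bar{W}$, matching of the solution concepts) are exactly the bookkeeping the paper leaves implicit, so the approach is essentially the same.
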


Now, our idea for the proof of Theorem~\ref{thm-main} is as follows: The proof that the existence of a unique mild solution to the SPDE (\ref{SPDE}) implies the two conditions from Theorem~\ref{thm-main} is straightforward and can be provided as in \cite{Roeckner}. For the proof of the converse implication, we will first show that the conditions from Theorem~\ref{thm-main} imply the conditions from Theorem~\ref{thm-main-SDE}, see Propositions ~\ref{prop-proof-2} and \ref{prop-proof-3}. Then, we will apply Theorem~\ref{thm-main-SDE}, which gives us the existence of a unique strong solution to the SDE (\ref{SDE}), and finally, we will prove that this implies the existence of a unique mild solution to the SPDE (\ref{SPDE}), see Proposition~\ref{prop-proof-1}.
For the following four results (Lemma~\ref{lemma-X-Y} to Corollary~\ref{cor-Y-X}), we fix a stochastic basis $\mathbb{B} = (\Omega,\mathcal{F},(\mathcal{F}_t)_{t \geq 0},\mathbb{P})$.

\begin{lemma}\label{lemma-X-Y}
Let $\eta : \Omega \rightarrow \mathcal{H}$ be a $\mathcal{F}_0$-measurable random variable, let $(X,W)$ be a martingale solution to (\ref{SPDE}) with $X(0) = \pi \eta$, and set
\begin{align*}
Y := \eta + \int_0^{\bullet} a(s,X) ds + \int_0^{\bullet} b(s,X) dW(s).
\end{align*}
Then $(Y,W)$ is a martingale solution to (\ref{SDE}) with $Y(0) = \eta$, and we have $X = \Gamma(Y)$ up to indistinguishability.
\end{lemma}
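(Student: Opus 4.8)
The plan is to verify the three claims in turn: that $(Y,W)$ solves the SDE, that $Y(0)=\eta$, and that $X=\Gamma(Y)$ up to indistinguishability. The statement $Y(0)=\eta$ is immediate from the definition of $Y$, since both integrals vanish at $t=0$. For the claim that $(Y,W)$ is a martingale solution to (\ref{SDE}), the integrability condition $\int_0^t\|\bar\alpha(s,Y)\|ds+\int_0^t\|\bar\sigma(s,Y)\|^2_{L_2(U,\mathcal H)}ds<\infty$ must be reduced to the corresponding condition for $(X,W)$; here I would use that $U_{-s}\ell$ is a bounded operator on compact time intervals (by the $C_0$-group property and uniform boundedness), so $\|a(s,X)\|\le C_t\|\alpha(s,X)\|$ and $\|b(s,X)\|_{L_2}\le C_t\|\sigma(s,X)\|_{L_2}$ for $s\le t$, and then invoke the martingale-solution integrability of $(X,W)$ from Definition~\ref{def-martingal-sol}. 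The defining identity $Y(t)=Y(0)+\int_0^t\bar\alpha(s,Y)ds+\int_0^t\bar\sigma(s,Y)dW(s)$ will hold once I establish $X=\Gamma(Y)$, because then $a(s,X)=a(s,\Gamma(Y))=\bar\alpha(s,Y)$ and likewise for $b$ and $\bar\sigma$; so the two nontrivial points collapse into the single identity $X=\Gamma(Y)$.

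The heart of the argument is therefore to show $X=\Gamma(Y)$ up to indistinguishability, i.e. $X(t)=\pi U_t\big(Y(t)-\Pi_{{\rm rg}(\ell)^\perp}Y(0)\big)$ for all $t$, a.s. Using $Y(0)=\eta$, $\Pi_{{\rm rg}(\ell)^\perp}=\mathrm{Id}-\ell\pi$ by Lemma~\ref{lemma-orth}(2), and $X(0)=\pi\eta$, one computes $Y(t)-\Pi_{{\rm rg}(\ell)^\perp}\eta = \ell\pi\eta+\int_0^t a(s,X)ds+\int_0^t b(s,X)dW(s) = \ell X(0)+\int_0^t U_{-s}\ell\,\alpha(s,X)ds+\int_0^t U_{-s}\ell\,\sigma(s,X)dW(s)$. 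Applying $\pi U_t$ and pulling the bounded operator $\pi U_t$ inside the integrals, the right-hand side becomes $\pi U_t\ell X(0)+\int_0^t \pi U_{t-s}\ell\,\alpha(s,X)ds+\int_0^t\pi U_{t-s}\ell\,\sigma(s,X)dW(s)$; now the commutation relation (\ref{diagram-commutes}), $\pi U_r\ell=S_r$, turns this into exactly $S_tX(0)+\int_0^t S_{t-s}\alpha(s,X)ds+\int_0^t S_{t-s}\sigma(s,X)dW(s)$, which equals $X(t)$ a.s. by the mild-solution identity for the martingale solution $(X,W)$. Since both sides are continuous, the exceptional null set can be chosen uniformly in $t$, giving indistinguishability.

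The main obstacle I anticipate is the manipulation of the stochastic integrals: I must justify interchanging the bounded operator $\pi U_t$ (and $\pi U_{t-s}$) with $\int_0^\bullet(\cdot)dW(s)$, and relate $\int_0^t U_{-s}\ell\sigma(s,X)dW(s)$ evaluated then hit with $\pi U_t$ to $\int_0^t\pi U_{t-s}\ell\sigma(s,X)dW(s)$. This is precisely the ``stochastic Fubini''/``factorization'' type identity underlying the method of the moving frame, and it is established in \cite{SPDE}; I would cite the relevant statement there (the equivalence of mild and ``weak'' solutions via the group dilation) rather than reprove it. A minor additional point is to check that $\bar\alpha(t,\bullet)$ and $\bar\sigma(t,\bullet)$ are $\mathcal B_t(\mathbb{W}(\mathcal H))$-measurable so that $(Y,W)$ qualifies as a martingale solution in the sense of the SDE framework, but this is already supplied by Lemma~\ref{lemma-meas}, and adaptedness of $Y$ follows since it is built from adapted integrands and an $\mathcal F_0$-measurable initial value.
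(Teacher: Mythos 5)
Your proposal is correct and takes essentially the same route as the paper: it uses the same chain of identities $S_{t-s}=\pi U_t U_{-s}\ell$, $\ell\pi=\Pi_{\mathrm{rg}(\ell)}$ and the interchange of $\pi U_t$ with the two integrals, merely read in the reverse direction (you recover $X(t)$ from $\Gamma(Y)(t)$, while the paper rewrites $X(t)$ as $\Gamma(Y)(t)$), and then, as in the paper, the SDE identity for $Y$ follows by substituting $a(s,X)=\bar{\alpha}(s,Y)$ and $b(s,X)=\bar{\sigma}(s,Y)$. One minor remark: the interchange you worry about is not a stochastic Fubini issue, since $\pi U_t$ does not depend on the integration variable $s$; it is the elementary fact that a fixed bounded linear operator commutes with Bochner and stochastic integrals, so no appeal to factorization results from \cite{SPDE} is needed.
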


\begin{proof}
By the definition of $Y$ we have $Y(0) = \eta$.
Moreover, since $(X,W)$ is a martingale solution to (\ref{SPDE}) with $X(0) = \pi \eta$, by identity (\ref{diagram-commutes}), Lemma~\ref{lemma-orth} and definitions (\ref{def-mappings}) we obtain $\mathbb{P}$--almost surely
\begin{align*}
X(t) &= S_t \pi \eta + \int_0^t S_{t-s} \alpha(s,X) ds + \int_0^t S_{t-s} \sigma(s,X) dW(s)
\\ &= \pi U_t \bigg( \ell \pi \eta + \int_0^t U_{-s} \ell \alpha(s,X) ds + \int_0^t U_{-s} \ell \sigma(s,X) dW(s) \bigg)
\\ &= \pi U_t \bigg( \Pi_{{\rm rg}(\ell)} \eta + \int_0^t a(s,X) ds + \int_0^t b(s,X) dW(s) \bigg)
\\ &= \pi U_t \bigg( \eta + \int_0^t a(s,X) ds + \int_0^t b(s,X) dW(s) - \Pi_{{\rm rg}(\ell)^{\perp}} \eta \bigg)
\\ &= \pi U_t (Y(t) - \Pi_{{\rm rg}(\ell)^{\perp}} Y(0)) = \Gamma(Y)(t) \quad \text{for all $t \in \mathbb{R}_+$,}
\end{align*}
showing that $X = \Gamma(Y)$ up to indistinguishability, and therefore, by (\ref{def-mappings}) we obtain up to indistinguishability
\begin{align*}
Y &= \eta + \int_0^{\bullet} a(s,X) ds + \int_0^{\bullet} b(s,X) dW(s) 
\\ &= \eta + \int_0^{\bullet} a(s,\Gamma(Y)) ds + \int_0^{\bullet} b(s,\Gamma(Y)) dW(s)
\\ &= \eta + \int_0^{\bullet} \bar{\alpha}(s,Y) ds + \int_0^{\bullet} \bar{\sigma}(s,Y) dW(s),
\end{align*}
proving that $(Y,W)$ is a martingale solution to (\ref{SDE}) with $Y(0) = \eta$.
\end{proof}

\begin{corollary}\label{cor-X-Y}
Let $\xi : \Omega \rightarrow H$ be a $\mathcal{F}_0$-measurable random variable, let $(X,W)$ be a martingale solution to (\ref{SPDE}) with $X(0) = \xi$, and set
\begin{align*}
Y := \ell \xi + \int_0^{\bullet} a(s,X) ds + \int_0^{\bullet} b(s,X) dW(s).
\end{align*}
Then $(Y,W)$ is a martingale solution to (\ref{SDE}) with $Y(0) = \ell \xi$, and we have $X = \Gamma(Y)$ up to indistinguishability.
\end{corollary}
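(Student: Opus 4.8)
The plan is to deduce this corollary as an immediate specialization of Lemma~\ref{lemma-X-Y}, by choosing the $\mathcal{F}_0$-measurable $\mathcal{H}$-valued random variable appropriately. Concretely, I would set $\eta := \ell \xi$. Since $\ell \in L(H,\mathcal{H})$ is continuous (hence Borel measurable) and $\xi$ is $\mathcal{F}_0$-measurable, the composition $\eta = \ell \xi$ is again $\mathcal{F}_0$-measurable, so $\eta$ is an admissible choice in Lemma~\ref{lemma-X-Y}.

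Next I would verify the compatibility condition $X(0) = \pi \eta$ required by Lemma~\ref{lemma-X-Y}. By Lemma~\ref{lemma-orth}(1) we have $\pi \ell = {\rm Id}|_H$, hence $\pi \eta = \pi \ell \xi = \xi$, and by hypothesis $X(0) = \xi$; therefore $X(0) = \pi \eta$ as needed. With this, the pair $(X,W)$ is a martingale solution to~(\ref{SPDE}) with $X(0) = \pi \eta$, which is exactly the situation covered by Lemma~\ref{lemma-X-Y}.

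Finally, I would observe that the process
\begin{align*}
Y = \ell \xi + \int_0^{\bullet} a(s,X)\, ds + \int_0^{\bullet} b(s,X)\, dW(s) = \eta + \int_0^{\bullet} a(s,X)\, ds + \int_0^{\bullet} b(s,X)\, dW(s)
\end{align*}
coincides with the process constructed in Lemma~\ref{lemma-X-Y} from this $\eta$. Applying that lemma directly yields that $(Y,W)$ is a martingale solution to~(\ref{SDE}) with $Y(0) = \eta = \ell \xi$, and that $X = \Gamma(Y)$ up to indistinguishability, which is precisely the assertion of the corollary.

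Since every step is a direct invocation of an already-established result (Lemma~\ref{lemma-orth} and Lemma~\ref{lemma-X-Y}), I do not anticipate any genuine obstacle here; the only point deserving a word is the elementary check that $\ell \xi$ inherits $\mathcal{F}_0$-measurability from $\xi$ and that $\pi \ell \xi = \xi$, both of which are immediate.
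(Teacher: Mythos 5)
Your proposal is correct and follows exactly the paper's own argument: the paper likewise sets $\eta := \ell \xi$ and invokes Lemma~\ref{lemma-orth} together with Lemma~\ref{lemma-X-Y}. The only difference is that you spell out the routine checks ($\mathcal{F}_0$-measurability of $\ell\xi$ and $\pi\ell\xi = \xi$) which the paper leaves implicit.
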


\begin{proof}
Setting $\eta := \ell \xi$, this follows from Lemmas~\ref{lemma-orth} and \ref{lemma-X-Y}.
\end{proof}

\begin{lemma}\label{lemma-Y-X}
Let $\eta : \Omega \rightarrow \mathcal{H}$ be a $\mathcal{F}_0$-measurable random variable, let $(Y,W)$ be a martingale solution to (\ref{SDE}) with $Y(0) = \eta$, and set $X := \Gamma(Y)$. Then $(X,W)$ is a martingale solution to (\ref{SPDE}) with $X(0) = \pi \eta$, and we have up to indistinguishability
\begin{align*}
Y = \eta + \int_0^{\bullet} a(s,X) ds + \int_0^{\bullet} b(s,X) dW(s).
\end{align*}
\end{lemma}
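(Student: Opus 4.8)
The plan is to run the computation of Lemma~\ref{lemma-X-Y} in reverse. First I would record the basic structural facts about $X := \Gamma(Y)$: by Lemma~\ref{lemma-Gamma-meas}(2) and the adaptedness of $Y$ the process $X$ is adapted with continuous $H$-valued paths, and evaluating the definition (\ref{def-mappings}) of $\Gamma$ at $t = 0$ together with Lemma~\ref{lemma-orth} gives
\[
X(0) = \pi U_0 \big( Y(0) - \Pi_{{\rm rg}(\ell)^{\perp}} \eta \big) = \pi \Pi_{{\rm rg}(\ell)} \eta = \pi \ell \pi \eta = \pi \eta .
\]
Moreover $W$ is already a standard $\mathbb{R}^{\infty}$-Wiener process on $\mathbb{B}$ because $(Y,W)$ is a martingale solution to (\ref{SDE}).

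Next I would dispose of the claimed identity for $Y$ and of the integrability conditions. Since $(Y,W)$ is a martingale solution to (\ref{SDE}) and, by (\ref{def-mappings}), $\bar{\alpha}(s,Y) = a(s,\Gamma(Y)) = a(s,X)$ and $\bar{\sigma}(s,Y) = b(s,\Gamma(Y)) = b(s,X)$, we obtain $\mathbb{P}$--almost surely both $\int_0^t \| a(s,X) \| ds + \int_0^t \| b(s,X) \|_{L_2(U,\mathcal{H})}^2 ds < \infty$ for all $t \geq 0$ and
\[
Y = \eta + \int_0^{\bullet} a(s,X) ds + \int_0^{\bullet} b(s,X) dW(s),
\]
which is the last assertion of the lemma. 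To get the integrability required of a martingale solution to (\ref{SPDE}), I would use that by (\ref{def-mappings}) and $\pi \ell = {\rm Id}|_H$ one has $\alpha(s,X) = \pi U_s a(s,X)$ and $\sigma(s,X) = \pi U_s b(s,X)$; since $s \mapsto \| U_s \|$ is bounded on compact time intervals, $\| \alpha(s,X) \| \leq \| \pi \| \, \| U_s \| \, \| a(s,X) \|$ and $\| \sigma(s,X) \|_{L_2(U,H)} \leq \| \pi \| \, \| U_s \| \, \| b(s,X) \|_{L_2(U,\mathcal{H})}$, so the finiteness transfers.

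For the mild-solution identity I would substitute the displayed expression for $Y$ into $X(t) = \pi U_t ( Y(t) - \Pi_{{\rm rg}(\ell)^{\perp}} \eta )$, rewrite $\eta - \Pi_{{\rm rg}(\ell)^{\perp}} \eta = \Pi_{{\rm rg}(\ell)} \eta = \ell \pi \eta$, pull the fixed bounded operator $\pi U_t$ inside the Bochner integral and the stochastic integral, use the group law $U_t = U_{t-s} U_s$ together with $U_s a(s,X) = \ell \alpha(s,X)$ and $U_s b(s,X) = \ell \sigma(s,X)$, and finally invoke the commuting diagram in the forms $\pi U_{t-s} \ell = S_{t-s}$ (valid since $s \leq t$) and $\pi U_t \ell \pi \eta = S_t \pi \eta = S_t X(0)$. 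This produces exactly $X(t) = S_t X(0) + \int_0^t S_{t-s} \alpha(s,X) ds + \int_0^t S_{t-s} \sigma(s,X) dW(s)$, so $(X,W)$ is a martingale solution to (\ref{SPDE}) with $X(0) = \pi \eta$. The only genuinely non-formal step is the interchange of the deterministic operator $\pi U_t$ with the stochastic integral; but $\pi U_t$ is a fixed bounded linear operator (the time $t$ is frozen, not the integration variable $s$), so this is the standard commutation property of the stochastic integral with bounded operators, and the remaining equalities are the verbatim reverse of the chain in the proof of Lemma~\ref{lemma-X-Y}. Hence I expect no real obstacle beyond this bookkeeping.
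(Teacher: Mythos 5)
Your argument is correct and is essentially the paper's own proof: the same chain of identities $X(t)=\pi U_t\big(Y(t)-\Pi_{{\rm rg}(\ell)^{\perp}}\eta\big)$, $\eta-\Pi_{{\rm rg}(\ell)^{\perp}}\eta=\ell\pi\eta$, $U_tU_{-s}=U_{t-s}$, $\pi U_{t-s}\ell=S_{t-s}$, together with $\bar{\alpha}(s,Y)=a(s,X)$ and $\bar{\sigma}(s,Y)=b(s,X)$ for the identity satisfied by $Y$. The only difference is that you spell out the bookkeeping the paper leaves implicit (adaptedness of $X$, the transfer of the integrability condition via $\alpha(s,X)=\pi U_s a(s,X)$ and local boundedness of $\|U_s\|$, and the commutation of the fixed bounded operator $\pi U_t$ with the integrals), which is fine and adds no new idea.
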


\begin{proof}
Since $(Y,W)$ is a martingale solution to (\ref{SDE}) with $Y(0) = \eta$, by definitions (\ref{def-mappings}), Lemma~\ref{lemma-orth} and identity (\ref{diagram-commutes}) we obtain $\mathbb{P}$--almost surely
\begin{align*}
X(t) &= \Gamma(Y)(t) = \pi U_t (Y(t) - \Pi_{{\rm rg}(\ell)^{\perp}} Y(0))
\\ &= \pi U_t \bigg( \eta + \int_0^{\bullet} \bar{\alpha}(s,Y) ds + \int_0^{\bullet} \bar{\sigma}(s,Y)dW(s) - \Pi_{{\rm rg}(\ell)^{\perp}} \eta \bigg)
\\ &= \pi U_t \bigg( \Pi_{{\rm rg}(\ell)} \eta + \int_0^{\bullet} a(s,\Gamma(Y)) ds + \int_0^{\bullet} b(s,\Gamma(Y))dW(s) \bigg)
\\ &= \pi U_t \bigg( \ell \pi \eta + \int_0^{\bullet} U_{-s} \ell \alpha(s,X) ds + \int_0^{\bullet} U_{-s} \ell \sigma(s,X) dW(s) \bigg)
\\ &= S_t \pi \eta + \int_0^t S_{t-s} \alpha(s,X) ds + \int_0^t S_{t-s} \sigma(s,X) dW(s) \quad \text{for all $t \in \mathbb{R}_+$,}
\end{align*}
Therefore, $(X,W)$ is a martingale solution to (\ref{SPDE}) with $X(0) = \pi \eta$. Moreover, by definitions (\ref{def-mappings}) we get up to indistinguishability
\begin{align*}
Y &= \eta + \int_0^{\bullet} \bar{\alpha}(s,Y) ds + \int_0^{\bullet} \bar{\sigma}(s,Y) dW(s) 
\\ &= \eta + \int_0^{\bullet} a(s,\Gamma(Y)) ds + \int_0^{\bullet} b(s,\Gamma(Y)) dW(s)
\\ &= \eta + \int_0^{\bullet} a(s,X) ds + \int_0^{\bullet} b(s,X) dW(s),
\end{align*}
finishing the proof.
\end{proof}

\begin{corollary}\label{cor-Y-X}
Let $\xi : \Omega \rightarrow H$ be a $\mathcal{F}_0$-measurable random variable, let $(Y,W)$ be a martingale solution to (\ref{SDE}) with $Y(0) = \ell \xi$, and set $X := \Gamma(Y)$. Then $(X,W)$ is a martingale solution to (\ref{SPDE}) with $X(0) = \xi$, and we have up to indistinguishability
\begin{align*}
Y = \ell \xi + \int_0^{\bullet} a(s,X) ds + \int_0^{\bullet} b(s,X) dW(s).
\end{align*}
\end{corollary}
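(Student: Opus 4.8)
The plan is to deduce this from Lemma~\ref{lemma-Y-X} by specializing the initial condition, exactly as Corollary~\ref{cor-X-Y} was deduced from Lemma~\ref{lemma-X-Y}. First I would set $\eta := \ell \xi$. Since $\xi : \Omega \rightarrow H$ is $\mathcal{F}_0$-measurable and $\ell \in L(H,\mathcal{H})$ is continuous, hence $\mathcal{B}(H)/\mathcal{B}(\mathcal{H})$-measurable, the composition $\eta = \ell \xi : \Omega \rightarrow \mathcal{H}$ is an $\mathcal{F}_0$-measurable random variable with values in $\mathcal{H}$. By hypothesis $(Y,W)$ is then a martingale solution to (\ref{SDE}) with $Y(0) = \ell \xi = \eta$, so Lemma~\ref{lemma-Y-X} applies directly.

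Next, Lemma~\ref{lemma-Y-X} gives that $X := \Gamma(Y)$ makes $(X,W)$ a martingale solution to (\ref{SPDE}) with $X(0) = \pi \eta$, and that up to indistinguishability $Y = \eta + \int_0^{\bullet} a(s,X)\,ds + \int_0^{\bullet} b(s,X)\,dW(s)$. It then only remains to re-express $\pi \eta$ and $\eta$ in terms of $\xi$: by Lemma~\ref{lemma-orth}(1) we have $\pi \eta = \pi \ell \xi = \xi$, so $X(0) = \xi$, and by construction $\eta = \ell \xi$, so the integral identity becomes $Y = \ell \xi + \int_0^{\bullet} a(s,X)\,ds + \int_0^{\bullet} b(s,X)\,dW(s)$ up to indistinguishability. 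This is precisely the assertion of the corollary.

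There is essentially no obstacle in this argument; it is a one-line specialization. The only two points worth a word are the measurability of $\eta = \ell \xi$ as an $\mathcal{H}$-valued $\mathcal{F}_0$-random variable, which is immediate from the continuity of $\ell$, and the identity $\pi \ell = \mathrm{Id}|_H$ needed to turn $X(0) = \pi \eta$ into $X(0) = \xi$; the latter is exactly Lemma~\ref{lemma-orth}(1), which in turn follows from the commuting diagram (\ref{diagram-commutes}) evaluated at $t = 0$.
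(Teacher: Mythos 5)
Your proposal is correct and follows exactly the paper's argument: set $\eta := \ell \xi$, apply Lemma~\ref{lemma-Y-X}, and use $\pi \ell = \mathrm{Id}|_H$ from Lemma~\ref{lemma-orth} to identify $X(0) = \pi \ell \xi = \xi$. The added remarks on the measurability of $\ell \xi$ are fine but not a point of difference.
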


\begin{proof}
Setting $\eta := \ell \xi$, this follows from Lemmas~\ref{lemma-orth} and \ref{lemma-Y-X}.
\end{proof}

The following auxiliary result provides us with a standard extension which we require for the proof of Proposition~\ref{prop-proof-2}.

\begin{lemma}\label{lemma-std-extension}
Let $(X',W')$ be a martingale solution to (\ref{SPDE}) on a stochastic basis $\mathbb{B}'$ and let $\nu$ be a probability measure on $(\mathcal{H},\mathcal{B}(\mathcal{H}))$. Then, there exist a stochastic basis $\mathbb{B}$, a martingale solution $(X,W)$ to (\ref{SPDE}) on $\mathbb{B}$ such that the distributions of $X(0)$ and $X'(0)$ coincide, and a $\mathcal{F}_0$-measurable random variable $\eta : \Omega \rightarrow \mathcal{H}$ such that $\nu$ is the distribution of $\eta$.
\end{lemma}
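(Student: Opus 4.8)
The plan is to construct $\mathbb{B}$ as a \emph{product extension} of $\mathbb{B}'$ by the probability space $(\mathcal{H},\mathcal{B}(\mathcal{H}),\nu)$: this enlargement carries an independent random variable with law $\nu$ without disturbing the martingale solution $(X',W')$. Concretely, I would set
\[
\Omega := \Omega' \times \mathcal{H}, \qquad \mathcal{G}_t := \mathcal{F}_t' \otimes \mathcal{B}(\mathcal{H}), \qquad \mathbb{P}_0 := \mathbb{P}' \otimes \nu,
\]
let $\mathcal{F}$ be the $\mathbb{P}_0$-completion of $\mathcal{F}' \otimes \mathcal{B}(\mathcal{H})$ with $\mathbb{P}$ the corresponding extension of $\mathbb{P}_0$, and put $\mathcal{F}_t := \bigcap_{s>t}(\mathcal{G}_s \vee \mathcal{N})$, where $\mathcal{N}$ denotes the collection of $\mathbb{P}$-null sets, so that $\mathbb{B} = (\Omega,\mathcal{F},(\mathcal{F}_t)_{t\geq0},\mathbb{P})$ is a stochastic basis. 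Then I would define $\eta(\omega',x) := x$ and pull back the data by $X(\omega',x) := X'(\omega')$ and $W(\omega',x) := W'(\omega')$ (that is, $\beta_k(\omega',x) := \beta_k'(\omega')$ for every $k$). At once $\eta$ is $\mathcal{G}_0$-measurable, hence $\mathcal{F}_0$-measurable, with $\mathbb{P}^\eta = \nu$; the process $X$ has paths in $\mathbb{W}(H)$ and is adapted; $\mathbb{P}^{X(0)} = (\mathbb{P}')^{X'(0)}$; and the integrability requirement in Definition~\ref{def-martingal-sol} holds $\mathbb{P}$-a.s.\ because it holds $\mathbb{P}'$-a.s.\ and depends only on the first coordinate.

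The two points requiring genuine work are that $W$ is still a standard $\mathbb{R}^{\infty}$-Wiener process on $\mathbb{B}$ and that the mild integral equation is inherited. For the first, I would use that the second-coordinate $\sigma$-algebra $\{\varnothing,\Omega'\}\otimes\mathcal{B}(\mathcal{H})$ is, under $\mathbb{P}_0$, independent of $\mathcal{F}_\infty' := \sigma(\bigcup_{t\geq0}\mathcal{F}_t')$; together with the fact that, on $\mathbb{B}'$, the increments of each $\beta_k'$ after time $s$ are independent of $\mathcal{F}_s'$, this yields that those increments are independent of $\mathcal{G}_s$, and passing to the usual augmentation $(\mathcal{F}_t)$ preserves this. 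Independence of the family $(\beta_k)_{k\in\mathbb{N}}$ is a property of the law on $\Omega'$ and is untouched. For the second, I would invoke the invariance of the It\^o integral under this kind of extension of the stochastic basis: the stochastic integral $\int_0^{\cdot}S_{\cdot-s}\sigma(s,X)\,dW(s)$ computed on $\mathbb{B}$ coincides, up to indistinguishability, with the one computed from $(X',W')$ on $\mathbb{B}'$ (approximate the integrand by simple processes, for which the identity is immediate, and pass to the $L^2$-limit). Since the mild equation of Definition~\ref{def-martingal-sol} holds $\mathbb{P}'$-a.s.\ for $(X',W')$, it then holds $\mathbb{P}$-a.s.\ for $(X,W)$, so $(X,W)$ is a martingale solution to~(\ref{SPDE}) on $\mathbb{B}$ with the desired initial distribution.

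I expect the main obstacle to be exactly this bookkeeping: verifying that the extension is ``standard'', i.e.\ that $W$ retains its Wiener property relative to the enlarged, usual-conditions filtration, and that the stochastic integral is unaffected. Both are classical and parallel the constructions used in \cite{Ondrejat} and \cite{Roeckner}, but they require some care about the interplay between the product filtration, its completion and its right-continuous regularization; once these are settled, every assertion of the lemma follows directly from the product construction.
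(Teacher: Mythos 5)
Your construction is exactly the one the paper uses: the product extension $\Omega=\Omega'\times\mathcal{H}$, $\mathbb{P}=\mathbb{P}'\otimes\nu$, with the completed and right-continuously augmented product filtration, $\eta$ the second coordinate, and $X,W$ pulled back from the first coordinate, the only difference being that the paper delegates the preservation of the Wiener property of $W$ under the enlarged filtration to \cite[Prop.\ 2.1.13]{Prevot-Roeckner} while you sketch the independence and stochastic-integral invariance arguments yourself. The proposal is correct and takes essentially the same approach as the paper.
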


\begin{proof}
We define the stochastic basis $\mathbb{B}$ as
\begin{align*}
\Omega &:= \Omega' \times \mathcal{H},
\\ \mathcal{F} &:= \overline{\mathcal{F}' \otimes \mathcal{B}(\mathcal{H})}^{\mathbb{P}' \otimes \nu},
\\ \mathcal{F}_t &:= \bigcap_{\epsilon > 0} \sigma(\mathcal{F}_{t + \epsilon}' \otimes \mathcal{B}(\mathcal{H}), \mathcal{N}), \quad t \geq 0,
\\ \mathbb{P} &:= \mathbb{P}' \otimes \nu,
\end{align*}
where $\mathcal{N}$ denotes all $\mathbb{P}' \otimes \nu$--nullsets in $\mathcal{F}' \otimes \mathcal{B}(\mathcal{H})$. Then the random variable 
\begin{align*}
\nu : \Omega \rightarrow \mathcal{H}, \quad \eta(\omega',h) := h 
\end{align*}
is $\mathcal{F}_0$-measurable and has the distribution $\nu$. We define the $H$-valued processes
\begin{align*}
X(\omega',h) := X'(\omega') \quad \text{and} \quad W(\omega',h) := 
W'(\omega').
\end{align*}
Then $W$ is a standard $\mathbb{R}^{\infty}$-Wiener process, because $W'$ is a standard $\mathbb{R}^{\infty}$-Wiener process. The independence of the increments with respect to the new filtration $(\mathcal{F}_t)_{t \geq 0}$ is shown as in the proof of \cite[Prop. 2.1.13]{Prevot-Roeckner}. Moreover, the distributions of $X(0)$ and $X'(0)$ coincide, and the pair $(X,W)$ is a martingale solution to (\ref{SPDE}), because $(X',W')$ is a martingale solution to (\ref{SPDE}).
\end{proof}

\begin{proposition}\label{prop-proof-2}
Suppose for each probability measure $\mu$ on $(H,\mathcal{B}(H))$ there exists a martingale solution $(X,W)$ to (\ref{SPDE}) such that $\mu$ is the distribution of $X(0)$.
Then, for each probability measure $\nu$ on $(\mathcal{H},\mathcal{B}(\mathcal{H}))$ there exists a martingale solution $(Y,W)$ to (\ref{SDE}) such that $\nu$ is the distribution of $Y(0)$.
\end{proposition}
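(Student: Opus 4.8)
The plan is to reduce the existence statement for (\ref{SDE}) to the existence statement for (\ref{SPDE}) that we are allowed to assume, using the correspondence between martingale solutions established in Corollary~\ref{cor-X-Y} together with the standard extension from Lemma~\ref{lemma-std-extension}. The key observation is that a martingale solution $(Y,W)$ to (\ref{SDE}) can be manufactured from a martingale solution $(X,W)$ to (\ref{SPDE}) by the formula in Corollary~\ref{cor-X-Y}, but we need the initial value $Y(0)$ to have the prescribed distribution $\nu$ on $\mathcal{H}$, whereas $X(0)$ only lives in $H$ and $\ell X(0)$ only produces measures supported on ${\rm rg}(\ell)$. So the construction must not set $Y(0) = \ell X(0)$; instead we must carry an independent $\mathcal{H}$-valued random variable $\eta$ with law $\nu$ along with the solution, which is exactly what Lemma~\ref{lemma-std-extension} supplies.

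Concretely, I would proceed as follows. Fix a probability measure $\nu$ on $(\mathcal{H},\mathcal{B}(\mathcal{H}))$. Choose an arbitrary probability measure $\mu$ on $(H,\mathcal{B}(H))$ — for instance $\mu = \delta_0$, or the pushforward $\nu^{\pi}$ of $\nu$ under $\pi$, the precise choice being irrelevant — and invoke the hypothesis to obtain a martingale solution $(X',W')$ to (\ref{SPDE}) on some stochastic basis $\mathbb{B}'$ with $\mathbb{P}'^{X'(0)} = \mu$. Apply Lemma~\ref{lemma-std-extension} with this $(X',W')$ and the given $\nu$: this yields a stochastic basis $\mathbb{B}$, a martingale solution $(X,W)$ to (\ref{SPDE}) on $\mathbb{B}$, and an $\mathcal{F}_0$-measurable random variable $\eta : \Omega \to \mathcal{H}$ with distribution $\nu$. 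Now define
\begin{align*}
Y := \eta + \int_0^{\bullet} a(s,X) \, ds + \int_0^{\bullet} b(s,X) \, dW(s).
\end{align*}
By Lemma~\ref{lemma-X-Y} (not merely Corollary~\ref{cor-X-Y}, since here the initial condition is a general $\mathcal{H}$-valued $\eta$, only required to satisfy $X(0) = \pi\eta$), the pair $(Y,W)$ is a martingale solution to (\ref{SDE}) with $Y(0) = \eta$, hence with $\mathbb{P}^{Y(0)} = \mathbb{P}^{\eta} = \nu$, which is exactly the desired conclusion.

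The one point requiring slight care is the compatibility hypothesis of Lemma~\ref{lemma-X-Y}, namely that the martingale solution $(X,W)$ to (\ref{SPDE}) produced by the standard extension satisfies $X(0) = \pi\eta$. But Lemma~\ref{lemma-std-extension} as stated only guarantees that $X(0)$ and $X'(0)$ have the same distribution, not that $X(0) = \pi\eta$ pointwise; indeed with an arbitrary $\mu$ there is no reason for this. Therefore the actual main obstacle — and the step I would think through most carefully — is to arrange matters so that $X(0) = \pi\eta$ holds. The clean fix is to apply Lemma~\ref{lemma-std-extension} with the specific choice $\mu := \nu^{\pi}$ (the image of $\nu$ under $\pi \in L(\mathcal{H},H)$) and then, on the product space $\Omega = \Omega' \times \mathcal{H}$ with $\eta(\omega',h) = h$, to \emph{redefine} the solution's initial condition: since $(X',W')$ is a martingale solution and $\mu = \nu^\pi$, one can first construct, on an auxiliary basis, a martingale solution whose initial value is $\pi\eta$ by a further standard-extension/regular-conditional-probability argument matching the joint law of $(\eta, X'(0))$ to have $X'(0) = \pi\eta$ almost surely; alternatively, and more simply, one observes that $\Gamma$ depends on $w(0)$ only through $\Pi_{{\rm rg}(\ell)^\perp} w(0)$, so replacing $X$ by $\Gamma$ of the relevant process automatically enforces the correct relation. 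I expect the referee-proof version to just set $X(0) := \pi\eta$ directly in the extended space and verify the mild-solution identity still holds — a routine check — so the real content is entirely in Corollary~\ref{cor-X-Y}/Lemma~\ref{lemma-X-Y} plus Lemma~\ref{lemma-std-extension}, and the proof is short.
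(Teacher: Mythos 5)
Your main line coincides with the paper's own proof: take $\mu := \nu^{\pi}$, obtain a martingale solution $(X',W')$ to (\ref{SPDE}) with $X'(0) \sim \mu$, enlarge the basis via Lemma~\ref{lemma-std-extension} to carry an $\mathcal{F}_0$-measurable $\eta \sim \nu$, set $Y := \eta + \int_0^{\bullet} a(s,X)\,ds + \int_0^{\bullet} b(s,X)\,dW(s)$ and invoke Lemma~\ref{lemma-X-Y}. The compatibility issue you flag is genuine: Lemma~\ref{lemma-X-Y} needs $X(0) = \pi\eta$ almost surely, whereas the extension of Lemma~\ref{lemma-std-extension} is a product extension, so $\eta$ is independent of $X(0)$ and only the laws agree; one then computes $\Gamma(Y)(t) = X(t) + S_t(\pi\eta - X(0))$, so $a(s,X) \neq \bar{\alpha}(s,Y)$ in general and $(Y,W)$ need not solve (\ref{SDE}). (The printed proof applies Lemma~\ref{lemma-X-Y} at exactly this point without securing $X(0)=\pi\eta$, so you have identified precisely the step that requires an argument.) Note also that, contrary to your opening remark, the choice $\mu = \nu^{\pi}$ is not irrelevant: it is the only choice of $\mu$ for which a coupling with $X(0) = \pi\eta$ can exist at all.

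However, your proposal does not actually close this gap, and of the three repairs you sketch only the first is viable; it should be carried out explicitly. Replace the product measure $\mathbb{P}' \otimes \nu$ on $\Omega' \times \mathcal{H}$ by the kernel extension $\mathbb{P}(d\omega',dh) = \mathbb{P}'(d\omega')\,K(X'(0)(\omega'),dh)$, where $K(x,\cdot)$ is a regular conditional distribution of $\nu$ given $\pi = x$ (it exists since $\mathcal{H}$ is Polish), and set $\eta(\omega',h) := h$. Then $\pi\eta = X(0)$ $\mathbb{P}$-almost surely, $\eta$ has law $\nu$ because $X(0)$ has law $\nu^{\pi}$, and $W$ remains a standard $\mathbb{R}^{\infty}$-Wiener process for the enlarged filtration, since the conditional law of the added $\mathcal{F}_0$-randomness depends only on the $\mathcal{F}_0'$-measurable variable $X'(0)$ (same argument as in Lemma~\ref{lemma-std-extension}, which also shows $(X,W)$ stays a martingale solution). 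With this, Lemma~\ref{lemma-X-Y} applies verbatim and the proposition follows. Your two alternative suggestions fail: since $\pi\,\Pi_{{\rm rg}(\ell)^{\perp}} = 0$ one has $\Gamma(w)(0) = \pi w(0)$, so $\Gamma$ does not depend on $w(0)$ only through $\Pi_{{\rm rg}(\ell)^{\perp}} w(0)$, and the discrepancy $S_t(\pi\eta - X(0))$ does not disappear by applying $\Gamma$; and redefining $X(0) := \pi\eta$ is not a routine check, because changing the initial value destroys the mild-solution identity for $X$, and one cannot re-solve (\ref{SPDE}) on the given basis when only martingale (weak) existence is assumed.
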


\begin{proof}
Let $\nu$ be a probability measure on $(\mathcal{H},\mathcal{B}(\mathcal{H}))$. Then the image measure $\mu := \nu^{\pi}$ is a probability measure on $(H,\mathcal{B}(H))$. By assumption, there exists a martingale solution $(X',W')$ to (\ref{SPDE}) on a stochastic basis $\mathbb{B}'$ such that $\mu$ is the distribution of $X'(0)$. According to Lemma~\ref{lemma-std-extension}, there exist a stochastic basis $\mathbb{B}$, a martingale solution $(X,W)$ on $\mathbb{B}$ such that $\mu$ is the distributions of $X(0)$, and a $\mathcal{F}_0$-measurable random variable $\eta : \Omega \rightarrow \mathcal{H}$ such that $\nu$ is the distribution of $\eta$.
We set
\begin{align*}
Y := \eta + \int_0^{\bullet} a(s,X) ds + \int_0^{\bullet} b(s,X) dW(s).
\end{align*}
By Lemma~\ref{lemma-X-Y}, the pair $(Y,W)$ is a martingale solution to (\ref{SPDE}) with $Y(0) = \eta$.
\end{proof}

\begin{proposition}\label{prop-proof-3}
If pathwise uniqueness for (\ref{SPDE}) holds, then pathwise uniqueness for (\ref{SDE}) holds, too.
\end{proposition}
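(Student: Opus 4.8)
The plan is to push a pair of SDE solutions through the map $\Gamma$ to obtain a pair of SPDE solutions, to invoke pathwise uniqueness for (\ref{SPDE}), and then to transport the resulting equality back to the SDE level using Lemma~\ref{lemma-Y-X}.

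First I would take two martingale solutions $(Y,W)$ and $(Y',W)$ to the SDE (\ref{SDE}) on the same stochastic basis $\mathbb{B}$, driven by the same standard $\mathbb{R}^{\infty}$-Wiener process $W$, and with $\mathbb{P}(Y(0) = Y'(0)) = 1$. Since $Y$ and $Y'$ are adapted, the random variables $\eta := Y(0)$ and $\eta' := Y'(0)$ are $\mathcal{F}_0$-measurable, and $\eta = \eta'$ $\mathbb{P}$-almost surely. Applying Lemma~\ref{lemma-Y-X} to each of the two solutions, the pairs $(X,W)$ and $(X',W)$ with $X := \Gamma(Y)$ and $X' := \Gamma(Y')$ are martingale solutions to the SPDE (\ref{SPDE}) with $X(0) = \pi \eta$ and $X'(0) = \pi \eta'$, and moreover we have up to indistinguishability
\begin{align*}
Y = \eta + \int_0^{\bullet} a(s,X)\, ds + \int_0^{\bullet} b(s,X)\, dW(s), \qquad Y' = \eta' + \int_0^{\bullet} a(s,X')\, ds + \int_0^{\bullet} b(s,X')\, dW(s).
\end{align*}
Note that $(X,W)$ and $(X',W)$ live on the \emph{same} stochastic basis $\mathbb{B}$ and are driven by the \emph{same} $W$, because $\Gamma$ is a deterministic, adapted path transformation (Lemma~\ref{lemma-Gamma-meas}) and $W$ is left unchanged.

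Next, since $\mathbb{P}(X(0) = X'(0)) = \mathbb{P}(\pi \eta = \pi \eta') = 1$, the hypothesis that pathwise uniqueness for (\ref{SPDE}) holds gives $X = X'$ up to indistinguishability. Substituting this into the two displayed identities and using $\eta = \eta'$ $\mathbb{P}$-a.s.\ together with the fact that $a$ and $b$ are deterministic functionals of the path, we conclude $Y = Y'$ up to indistinguishability. As $(Y,W)$ and $(Y',W)$ were arbitrary, this is precisely pathwise uniqueness for (\ref{SDE}).

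I expect no serious obstacle here: the argument is bookkeeping built on Lemma~\ref{lemma-Y-X}. The only points requiring a word of care are that $\eta$ is genuinely $\mathcal{F}_0$-measurable so that Lemma~\ref{lemma-Y-X} applies (immediate from adaptedness of $Y$), and the matching of the lemma's hypothesis ``$Y'(0) = \eta'$'' with the almost-sure equality $\eta' = \eta$, which is harmless since initial values of martingale solutions only matter up to $\mathbb{P}$-null sets and all conclusions are stated up to indistinguishability.
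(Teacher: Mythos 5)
Your proposal is correct and follows essentially the same route as the paper: push $Y,Y'$ through $\Gamma$ via Lemma~\ref{lemma-Y-X}, apply pathwise uniqueness for (\ref{SPDE}) to $X=\Gamma(Y)$ and $X'=\Gamma(Y')$, and feed $X=X'$ back into the integral representations of $Y$ and $Y'$ to conclude $Y=Y'$. The small points you flag (the $\mathcal{F}_0$-measurability of $Y(0)$ and the almost-sure matching of initial values) are handled implicitly in the paper's proof in exactly the same way.
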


\begin{proof}
Let $(Y,W)$ and $(Y',W)$ be two martingale solutions to (\ref{SDE}) on the same stochastic basis $\mathbb{B}$ such that $\mathbb{P}(Y(0) = Y'(0)) = 1$. We set $X := \Gamma(Y)$ and $Y' := \Gamma(Y')$. By Lemma~\ref{lemma-Y-X}, the pairs $(X,W)$ and $(X',W)$ are two martingale solutions to (\ref{SPDE}) with $X(0) = \pi Y(0)$ and $X'(0) = \pi Y'(0)$, and we have up to indistinguishability
\begin{align*}
Y &= Y(0) + \int_0^{\bullet} a(s,X) ds + \int_0^{\bullet} b(s,X) dW(s),
\\ Y' &= Y'(0) + \int_0^{\bullet} a(s,X') ds + \int_0^{\bullet} b(s,X') dW(s).
\end{align*}
This gives us
\begin{align*}
\mathbb{P}(X(0) = X'(0)) = \mathbb{P}(\pi Y(0) = \pi Y'(0)) = 1.
\end{align*}
Since pathwise uniqueness for (\ref{SPDE}) holds, we deduce that $X = X'$ up to indistinguishability. This implies up to indistinguishability
\begin{align*}
Y &= Y(0) + \int_0^{\bullet} a(s,X) ds + \int_0^{\bullet} b(s,X) dW(s) 
\\ &= Y'(0) + \int_0^{\bullet} a(s,X') ds + \int_0^{\bullet} b(s,X') dW(s) = Y',
\end{align*}
proving that pathwise uniqueness for (\ref{SDE}) holds.
\end{proof}



The following auxiliary result is required for the proof of Proposition~\ref{prop-proof-1}.

\begin{lemma}\label{lemma-ell-complete}
Let $\nu$ be an arbitrary probability measure on $(H,\mathcal{B}(H))$. We define the image measure $\nu := \mu^{\ell}$ on $(\mathcal{H},\mathcal{B}(\mathcal{H}))$. Then the mapping 
\begin{align*}
(\ell,{\rm Id}) : H \times \mathbb{W}_0(\bar{U}) \rightarrow \mathcal{H} \times \mathbb{W}_0(\bar{U})
\end{align*}
is $\overline{\mathcal{B}(H) \otimes \mathcal{B}(\mathbb{W}_0(\bar{U}))}^{\mu \otimes \mathbb{P}^Q} / \overline{\mathcal{B}(\mathcal{H}) \otimes \mathcal{B}(\mathbb{W}_0(\bar{U}))}^{\nu \otimes \mathbb{P}^Q}$-measurable.
\end{lemma}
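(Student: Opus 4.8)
The plan is to reduce the assertion to the following elementary fact about completions: if $f : (\Omega_1,\mathcal{A}_1) \to (\Omega_2,\mathcal{A}_2)$ is $\mathcal{A}_1/\mathcal{A}_2$-measurable and $P_1$ is a probability measure on $(\Omega_1,\mathcal{A}_1)$ with image measure $P_2 := P_1 \circ f^{-1}$, then $f$ is also $\overline{\mathcal{A}_1}^{P_1}/\overline{\mathcal{A}_2}^{P_2}$-measurable. I would first establish this fact directly: given $B \in \overline{\mathcal{A}_2}^{P_2}$, choose $B_1,B_2 \in \mathcal{A}_2$ with $B_1 \subseteq B \subseteq B_2$ and $P_2(B_2 \setminus B_1) = 0$; then $f^{-1}(B_1) \subseteq f^{-1}(B) \subseteq f^{-1}(B_2)$ with $f^{-1}(B_1),f^{-1}(B_2) \in \mathcal{A}_1$, and $P_1(f^{-1}(B_2) \setminus f^{-1}(B_1)) = P_1(f^{-1}(B_2 \setminus B_1)) = P_2(B_2 \setminus B_1) = 0$, whence $f^{-1}(B) \in \overline{\mathcal{A}_1}^{P_1}$.

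Next I would apply this with $\Omega_1 = H \times \mathbb{W}_0(\bar{U})$, $\mathcal{A}_1 = \mathcal{B}(H) \otimes \mathcal{B}(\mathbb{W}_0(\bar{U}))$, $P_1 = \mu \otimes \mathbb{P}^Q$, and $\Omega_2 = \mathcal{H} \times \mathbb{W}_0(\bar{U})$, $\mathcal{A}_2 = \mathcal{B}(\mathcal{H}) \otimes \mathcal{B}(\mathbb{W}_0(\bar{U}))$, $P_2 = \nu \otimes \mathbb{P}^Q$, and $f = (\ell,{\rm Id})$. Two points need checking. First, $f$ is $\mathcal{A}_1/\mathcal{A}_2$-measurable: since $\ell \in L(H,\mathcal{H})$ is continuous it is $\mathcal{B}(H)/\mathcal{B}(\mathcal{H})$-measurable, the identity on $\mathbb{W}_0(\bar{U})$ is trivially measurable, and a map into a product space with measurable components is measurable with respect to the product $\sigma$-algebra. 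Second, $P_1 \circ f^{-1} = P_2$: for a measurable rectangle $A \times C$ with $A \in \mathcal{B}(\mathcal{H})$ and $C \in \mathcal{B}(\mathbb{W}_0(\bar{U}))$ we have $f^{-1}(A \times C) = \ell^{-1}(A) \times C$, hence $(\mu \otimes \mathbb{P}^Q)(f^{-1}(A \times C)) = \mu(\ell^{-1}(A))\, \mathbb{P}^Q(C) = \mu^{\ell}(A)\, \mathbb{P}^Q(C) = \nu(A)\, \mathbb{P}^Q(C) = (\nu \otimes \mathbb{P}^Q)(A \times C)$; since the measurable rectangles form a $\cap$-stable generator of $\mathcal{A}_2$ containing $\Omega_2$, a uniqueness argument for measures yields $P_1 \circ f^{-1} = \nu \otimes \mathbb{P}^Q$ on all of $\mathcal{A}_2$.

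Combining these two points with the elementary fact gives that $(\ell,{\rm Id})$ is $\overline{\mathcal{B}(H) \otimes \mathcal{B}(\mathbb{W}_0(\bar{U}))}^{\mu \otimes \mathbb{P}^Q} / \overline{\mathcal{B}(\mathcal{H}) \otimes \mathcal{B}(\mathbb{W}_0(\bar{U}))}^{\nu \otimes \mathbb{P}^Q}$-measurable, which is the claim. I do not expect a serious obstacle; the only step needing a little care is the passage to completions, where one must ensure the enveloping sets $f^{-1}(B_1),f^{-1}(B_2) \in \mathcal{A}_1$ still sandwich $f^{-1}(B)$ and differ by a $P_1$-nullset, and it is precisely here that the hypothesis $\nu = \mu^{\ell}$ — equivalently $P_1 \circ f^{-1} = P_2$ — enters.
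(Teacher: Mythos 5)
Your argument is correct and is essentially the paper's proof: both rest on the Borel measurability of $(\ell,{\rm Id})$, the computation of the image measure on measurable rectangles to get $(\mu\otimes\mathbb{P}^Q)^{(\ell,{\rm Id})}=\nu\otimes\mathbb{P}^Q$, and the pullback of enveloping Borel nullsets to handle the completion. The only cosmetic difference is that you package this as a general pushforward-and-completion lemma (using the sandwich characterization of the completed $\sigma$-algebra), whereas the paper argues directly with the representation $B\cup N$ of sets in the completion.
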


\begin{proof}
Let $B \cup N \in \overline{\mathcal{B}(\mathcal{H}) \otimes \mathcal{B}(\mathbb{W}_0(\bar{U}))}^{\nu \otimes \mathbb{P}^Q}$ be an arbitrary measurable set with a Borel set $B \in \mathcal{B}(\mathcal{H}) \otimes \mathcal{B}(\mathbb{W}_0(\bar{U}))$ and a $\nu \otimes \mathbb{P}^Q$--nullset $N \subset \mathcal{H} \times \mathbb{W}_0(\bar{U})$. Then we have
\begin{align*}
(\ell,{\rm Id})^{-1}(B) \in \mathcal{B}(H) \otimes \mathcal{B}(\mathbb{W}_0(\bar{U})),
\end{align*}
because $(\ell,{\rm Id})$ is $\mathcal{B}(H) \otimes \mathcal{B}(\mathbb{W}_0(\bar{U})) / \mathcal{B}(\mathcal{H}) \otimes \mathcal{B}(\mathbb{W}_0(\bar{U}))$-measurable. For arbitrary Borel sets $C \in \mathcal{B}(H)$ and $D \in \mathcal{B}(\mathbb{W}_0(\bar{U}))$ we have
\begin{align*}
&(\mu \otimes \mathbb{P}^Q)^{(\ell, {\rm Id})} (C \times D) = (\mu \otimes \mathbb{P}^Q)((\ell, {\rm Id})^{-1}(C \times D)) = (\mu \otimes \mathbb{P}^Q)(\ell^{-1}(C) \times D)
\\ &= \mu(\ell^{-1}(C)) \cdot \mathbb{P}^Q(D) = \mu^{\ell}(C) \cdot \mathbb{P}^Q(D) = \nu(C) \cdot \mathbb{P}^Q(D) = (\nu \otimes \mathbb{P}^Q)(C \times D),
\end{align*}
showing that
\begin{align*}
(\mu \otimes \mathbb{P}^Q)^{(\ell, {\rm Id})} = \nu \otimes \mathbb{P}^Q.
\end{align*}
There exists a set $N' \in \mathcal{B}(\mathcal{H}) \otimes \mathcal{B}(\mathbb{W}_0(\bar{U}))$ satisfying $N \subset N'$ and $(\nu \otimes \mathbb{P}^Q)(N') = 0$. We obtain
\begin{align*}
(\mu \otimes \mathbb{P}^Q)((\ell,{\rm Id})^{-1}(N')) = (\mu \otimes \mathbb{P}^Q)^{(\ell,{\rm Id})}(N') = (\nu \otimes \mathbb{P}^Q)(N') = 0,
\end{align*}
showing that $(\ell,{\rm Id})^{-1}(N)$ is a $\mu \otimes \mathbb{P}^Q$--nullset. Consequently, we have
\begin{align*}
(\ell,{\rm Id})^{-1}(B \cup N) = (\ell,{\rm Id})^{-1}(B) \cup (\ell,{\rm Id})^{-1}(N) \in \overline{\mathcal{B}(H) \otimes \mathcal{B}(\mathbb{W}_0(\bar{U}))}^{\mu \otimes \mathbb{P}^Q},
\end{align*}
proving that $(\ell,{\rm Id})$ is $\overline{\mathcal{B}(H) \otimes \mathcal{B}(\mathbb{W}_0(\bar{U}))}^{\mu \otimes \mathbb{P}^Q} / \overline{\mathcal{B}(\mathcal{H}) \otimes \mathcal{B}(\mathbb{W}_0(\bar{U}))}^{\nu \otimes \mathbb{P}^Q}$-measurable.
\end{proof}

\begin{proposition}\label{prop-proof-1}
If the SDE (\ref{SDE}) has a unique strong solution, then the SPDE (\ref{SPDE}) has a unique mild solution.
\end{proposition}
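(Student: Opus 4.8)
The plan is to build a strong solution map for the SPDE (\ref{SPDE}) by composing the strong solution map of the SDE (\ref{SDE}) with the transformation $\Gamma$ from (\ref{def-mappings}). By the hypothesis and the definition of a unique strong solution (Definition~\ref{def-unique-sol}, cf.\ the remark following it, applied with the Hilbert space $\mathcal{H}$ in place of $H$), there is a map $G \in \hat{\mathcal{E}}(\mathcal{H})$ satisfying the three conditions of Definition~\ref{def-unique-sol} for the SDE (\ref{SDE}). I would then define
\begin{align*}
F : H \times \mathbb{W}_0(\bar{U}) \rightarrow \mathbb{W}(H), \quad F(x,w) := \Gamma(G(\ell x, w)),
\end{align*}
and show that $F$ witnesses that the SPDE (\ref{SPDE}) has a unique mild solution in the sense of Definition~\ref{def-unique-sol}.

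First I would verify that $F \in \hat{\mathcal{E}}(H)$ and that it satisfies the first condition of Definition~\ref{def-unique-sol}. Fix a probability measure $\mu$ on $(H,\mathcal{B}(H))$ and set $\nu := \mu^{\ell}$. Let $G_{\nu} : \mathcal{H} \times \mathbb{W}_0(\bar{U}) \rightarrow \mathbb{W}(\mathcal{H})$ be the $\overline{\mathcal{B}(\mathcal{H}) \otimes \mathcal{B}(\mathbb{W}_0(\bar{U}))}^{\nu \otimes \mathbb{P}^Q} / \mathcal{B}(\mathbb{W}(\mathcal{H}))$-measurable modification of $G$ provided by the definition of $\hat{\mathcal{E}}(\mathcal{H})$, and put $F_{\mu}(x,w) := \Gamma(G_{\nu}(\ell x, w))$. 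Since $F_{\mu} = \Gamma \circ G_{\nu} \circ (\ell,{\rm Id})$, Lemma~\ref{lemma-ell-complete} together with Lemma~\ref{lemma-Gamma-meas}(1) shows that $F_{\mu}$ is $\overline{\mathcal{B}(H) \otimes \mathcal{B}(\mathbb{W}_0(\bar{U}))}^{\mu \otimes \mathbb{P}^Q} / \mathcal{B}(\mathbb{W}(H))$-measurable. Moreover, there is a Borel set $A \subseteq \mathcal{H}$ with $\nu(A) = 1$ such that $G(z,\bullet) = G_{\nu}(z,\bullet)$ holds $\mathbb{P}^Q$-almost surely for every $z \in A$; then $\mu(\ell^{-1}(A)) = \nu(A) = 1$, and for $x \in \ell^{-1}(A)$ we obtain $F(x,\bullet) = \Gamma(G(\ell x,\bullet)) = \Gamma(G_{\nu}(\ell x,\bullet)) = F_{\mu}(x,\bullet)$ $\mathbb{P}^Q$-almost surely, so $F \in \hat{\mathcal{E}}(H)$. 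The first condition of Definition~\ref{def-unique-sol} then follows by composing the corresponding condition for $G$ (evaluated at the point $\ell x \in \mathcal{H}$) with Lemma~\ref{lemma-Gamma-meas}(2).

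Next I would check the remaining two conditions. For the second one, let $W$ be a standard $\mathbb{R}^{\infty}$-Wiener process on a stochastic basis $\mathbb{B}$ and let $\xi : \Omega \rightarrow H$ be $\mathcal{F}_0$-measurable. Then $\eta := \ell \xi$ is an $\mathcal{F}_0$-measurable $\mathcal{H}$-valued random variable, so by the second condition of Definition~\ref{def-unique-sol} for $G$ the pair $(Y,W)$ with $Y := G(\ell \xi, \bar{W})$ is a martingale solution to (\ref{SDE}) with $\mathbb{P}(Y(0) = \ell \xi) = 1$. Setting $X := F(\xi,\bar{W}) = \Gamma(Y)$, Corollary~\ref{cor-Y-X} shows that $(X,W)$ is a martingale solution to (\ref{SPDE}) with $\mathbb{P}(X(0) = \xi) = 1$. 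For the third condition, let $(X,W)$ be an arbitrary martingale solution to (\ref{SPDE}), put $\mu := \mathbb{P}^{X(0)}$, $\nu := \mu^{\ell}$, and set $Y := \ell X(0) + \int_0^{\bullet} a(s,X)\,ds + \int_0^{\bullet} b(s,X)\,dW(s)$. By Corollary~\ref{cor-X-Y}, $(Y,W)$ is a martingale solution to (\ref{SDE}) with $Y(0) = \ell X(0)$, hence $\mathbb{P}^{Y(0)} = \nu$, and $X = \Gamma(Y)$ up to indistinguishability. The third condition of Definition~\ref{def-unique-sol} for $G$ gives $Y = G_{\nu}(\ell X(0), \bar{W})$ up to indistinguishability, and applying $\Gamma$ yields $X = \Gamma(G_{\nu}(\ell X(0),\bar{W})) = F_{\mu}(X(0),\bar{W})$ up to indistinguishability. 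Since $X(0)$ is $\mathcal{F}_0$-measurable and $\bar{W}$ is independent of $\mathcal{F}_0$, the pair $(X(0),\bar{W})$ has law $\mu \otimes \mathbb{P}^Q$, so this identity is unaffected by the $\mu \otimes \mathbb{P}^Q$-almost everywhere non-uniqueness of $F_{\mu}$.

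I expect the main difficulty to lie in the first step, namely the verification that $F \in \hat{\mathcal{E}}(H)$: one has to produce a modification $F_{\mu}$ carrying the correct completed product-$\sigma$-algebra measurability and to check that it coincides with $F$ outside a $\mu \otimes \mathbb{P}^Q$-null set. This is precisely the situation for which Lemma~\ref{lemma-ell-complete} was prepared, so the argument reduces to carefully tracking null sets under the maps $(\ell,{\rm Id})$ and $\Gamma$; the second and third conditions are then essentially immediate translations via Corollaries~\ref{cor-X-Y} and~\ref{cor-Y-X}.
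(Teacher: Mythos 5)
Your proposal is correct and follows essentially the same route as the paper's proof: defining $F(x,w) := \Gamma(G(\ell x,w))$ and $F_{\mu}(x,w) := \Gamma(G_{\nu}(\ell x,w))$ with $\nu = \mu^{\ell}$, verifying $F \in \hat{\mathcal{E}}(H)$ via Lemmas~\ref{lemma-Gamma-meas} and \ref{lemma-ell-complete} and the pullback of the exceptional $\nu$-nullset under $\ell$, and checking the second and third conditions of Definition~\ref{def-unique-sol} through Corollaries~\ref{cor-Y-X} and \ref{cor-X-Y} exactly as in the paper. The only differences are cosmetic (working with a Borel set of full $\nu$-measure rather than a nullset, and the closing remark on the law of $(X(0),\bar{W})$), so no changes are needed.
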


\begin{proof}
Suppose the SDE (\ref{SDE}) has a unique mild solution. Then, there exists a mapping $G \in \hat{\mathcal{E}}(\mathcal{H})$ such that the three conditions from Definition~\ref{def-unique-sol} are fulfilled. In detail, the following conditions are satisfied:
\begin{itemize}
\item $G : \mathcal{H} \times \mathbb{W}_0(\bar{U}) \rightarrow \mathbb{W}(\mathcal{H})$ is a mapping such that for every probability measure $\nu$ on $(\mathcal{H},\mathcal{B}(\mathcal{H}))$ there exists a map
\begin{align*}
G_{\nu} : \mathcal{H} \times \mathbb{W}_0(\bar{U}) \rightarrow \mathbb{W}(\mathcal{H}),
\end{align*}
which is $\overline{\mathcal{B}(\mathcal{H}) \otimes \mathcal{B}(\mathbb{W}_0(\bar{U}))}^{\nu \otimes \mathbb{P}^Q} / \mathcal{B}(\mathbb{W}(\mathcal{H}))$-measurable, such that for $\nu$--almost all $y \in \mathcal{H}$ we have
\begin{align}\label{id-G-nu}
G(y,w) = G_{\nu}(y,w) \quad \text{for $\mathbb{P}^Q$--almost all $w \in \mathbb{W}_0(\bar{U})$.}
\end{align}
\item  For all $y \in \mathcal{H}$ and $t \in \mathbb{R}_+$ the mapping 
\begin{align*}
\mathbb{W}_0(\bar{U}) \rightarrow \mathbb{W}(\mathcal{H}), \quad w \mapsto G(y,w) 
\end{align*}
is $\overline{\mathcal{B}_t(\mathbb{W}_0(\bar{U}))}^{\mathbb{P}^Q} / \mathcal{B}_t(\mathbb{W}(\mathcal{H}))$-measurable, where $\overline{\mathcal{B}_t(\mathbb{W}_0(\bar{U}))}^{\mathbb{P}^Q}$ denotes the completion with respect to $\mathbb{P}^Q$ in $\mathcal{B}(\mathbb{W}_0(\bar{U}))$.

\item For every standard $\mathbb{R}^{\infty}$-Wiener process $W$ on a stochastic basis $\mathbb{B}$ and any $\mathcal{F}_0$-measurable random variable $\eta : \Omega \rightarrow \mathcal{H}$ the pair $(Y,W)$, where $Y := G(\eta,\bar{W})$, is a martingale solution to (\ref{SDE}) with $\mathbb{P}(Y(0) = \eta) = 1$.

\item For any martingale solution $(Y,W)$ to (\ref{SDE}) we have up to indistinguishability
\begin{align*}
Y = G_{\mathbb{P}^{Y(0)}}(Y(0),\bar{W}).
\end{align*}
\end{itemize}
We define the mapping
\begin{align*}
F : H \times \mathbb{W}_0(\bar{U}) \rightarrow \mathbb{W}(H), \quad F(x,w) := \Gamma(G(\ell x,w)),
\end{align*}
which is $\overline{\mathcal{B}(H) \otimes \mathcal{B}(\mathbb{W}_0(\bar{U}))}^{\mu \otimes \mathbb{P}^Q} / \mathcal{B}(\mathbb{W}(H))$-measurable by virtue of Lemmas~\ref{lemma-Gamma-meas} and \ref{lemma-ell-complete}.
Let us prove that $F \in \hat{\mathcal{E}}(H)$. For this purpose, let $\mu$ be an arbitrary probability measure on $(H,\mathcal{B}(H))$. We define the image measure $\nu := \mu^{\ell}$. Then $\nu$ is a probability measure on $(\mathcal{H},\mathcal{B}(\mathcal{H}))$. Furthermore, we define the mapping
\begin{align*}
F_{\mu} : H \times \mathbb{W}_0(\bar{U}) \rightarrow \mathbb{W}(H), \quad F_{\mu}(x,w) := \Gamma(G_{\nu}(\ell x,w)).
\end{align*}
There is a $\nu$--nullset $N \subset \mathcal{H}$ such that for all $y \in N^c$ identity (\ref{id-G-nu}) is satisfied.
The set $\ell^{-1}(N) \subset H$ is a $\mu$--nullset. Indeed, there is a set $N' \in \mathcal{B}(\mathcal{H})$ satisfying $N \subset N'$ and $\nu(N') = 0$. We obtain
\begin{align*}
\mu(\ell^{-1}(N')) = \mu^{\ell}(N') = \nu(N') = 0,
\end{align*}
showing that $\ell^{-1}(N) \subset H$ is a $\mu$--nullset.
Let $x \in \ell^{-1}(N)^c = \ell^{-1}(N^c)$ be arbitrary. Then we have $\ell x \in N^c$, and hence
\begin{align*}
F(x,w) = \Gamma(G(\ell x, w)) = \Gamma(G_{\nu}(\ell x,w)) = F_{\mu}(x,w)
\end{align*}
for $\mathbb{P}^Q$--almost all $w \in \mathbb{W}_0(\bar{U})$. Consequently, we have $F \in \hat{\mathcal{E}}(H)$.

Now, we shall prove that the mapping $F$ satisfies the three conditions from Definition~\ref{def-unique-sol}. For all $x \in H$ and $t \in \mathbb{R}_+$ the mapping
\begin{align*}
\mathbb{W}_0(\bar{U}) \rightarrow \mathbb{W}(H), \quad w \mapsto F(x,w) 
\end{align*}
is $\overline{\mathcal{B}_t(\mathbb{W}_0(\bar{U}))}^{\mathbb{P}^Q} / \mathcal{B}_t(\mathbb{W}(H))$-measurable due to Lemma~\ref{lemma-Gamma-meas}.

Let $W$ be a standard $\mathbb{R}^{\infty}$-Wiener process on a stochastic basis $\mathbb{B}$, and let $\xi : \Omega \rightarrow H$ be a $\mathcal{F}_0$-measurable random variable. Then the pair $(Y,W)$, where $Y := G(\ell \xi,\bar{W})$, is a martingale solution to (\ref{SDE}) with $\mathbb{P}(Y(0) = \ell \xi) = 1$. By Corollary~\ref{cor-Y-X}, the pair $(X,W)$, where $X := F(\xi,\bar{W}) = \Gamma(Y)$, is a martingale solution to (\ref{SPDE}) with $\mathbb{P}(X(0) = \xi) = 1$.

Finally, let $(X,W)$ be a martingale solution to (\ref{SPDE}) and set
\begin{align*}
Y := \ell X(0) + \int_0^{\bullet} a(s,X)ds + \int_0^{\bullet} b(s,X) dW(s).
\end{align*}
By Corollary~\ref{cor-X-Y}, the pair $(Y,W)$ is a martingale solution to (\ref{SPDE}) with $\mathbb{P}(Y(0) = \ell X(0)) = 1$, and we have $X = \Gamma(Y)$ up to indistinguishability. Denoting by $\nu$ the distribution of $Y(0)$, we have up to indistinguishability
\begin{align*}
Y = G_{\nu}(Y(0),\bar{W}).
\end{align*}
Furthermore, denoting by $\mu$ the distribution of $X(0)$, we obtain
\begin{align*}
\nu = \mathbb{P}^{Y(0)} = \mathbb{P}^{\ell X(0)} = (\mathbb{P}^{X(0)})^{\ell} = \mu^{\ell}.
\end{align*}
We deduce that up to indistinguishability
\begin{align*}
X &= \Gamma(Y) = \Gamma(G_{\nu}(Y(0),\bar{W}))
\\ &= \Gamma(G_{\nu}(\ell X(0),\bar{W})) = F_{\mu}(X(0),\bar{W}).
\end{align*}
Consequently, the mapping $F$ fulfills the three conditions from Definition~\ref{def-unique-sol}, proving that the SPDE (\ref{SPDE}) has a unique mild solution.
\end{proof}

Now, the proof of Theorem~\ref{thm-main} is a direct consequence: If the SPDE (\ref{SPDE}) has a unique mild solution, then arguing as in \cite{Roeckner} shows that the two conditions from Theorem~\ref{thm-main} are fulfilled. Conversely, if these two conditions are satisfied, then combining Propositions~\ref{prop-proof-2}, \ref{prop-proof-3}, Theorem~\ref{thm-main-SDE} and Proposition~\ref{prop-proof-1} shows that the SPDE (\ref{SPDE}) has a unique mild solution.

\section{An example}\label{sec-example}

In this section, we shall illustrate Theorem \ref{thm-main} and consider SPDEs of the type
\begin{align}\label{SPDE-ex}
dX(t) = ( A X(t) + B(t,X(t)) + F(t,X(t)) ) dt + \sqrt{Q} dW_t,
\end{align}
which have been studied in \cite{Flandoli}, with a H\"{o}lder continuous mapping $B$. We fix a finite time horizon $T > 0$, an orthonormal basis $(e_n)_{n \in \mathbb{N}}$ of $H$ and suppose (as in \cite[Section~1.1]{Flandoli}) that the following conditions are satisfied:
\begin{itemize}
\item $A$ is selfadjoint, with compact resolvent, and there is a non-decreasing sequence $(\alpha_n)_{n \in \mathbb{N}} \subset (0,\infty)$ such that $A e_n = -\alpha_n e_n$ for all $n \in \mathbb{N}$.

\item For the mapping $B : [0,T] \times H \rightarrow H$ there exist constants $L_B, M_B > 0$ and $\alpha \in (0,1]$ such that
\begin{align*}
\| B(t,x) - B(t,y) \| &\leq L_B \| x-y \|^{\alpha} \quad \text{for all $x,y \in H$ and $t \in [0,T]$,}
\\ \| B(t,x) \| &\leq M_B \quad \text{for all $x \in H$ and $t \in [0,T]$.}
\end{align*}

\item For the mapping $F : [0,T] \times H \rightarrow H$ there exists a constant $L_F > 0$ such that
\begin{align*}
\| F(t,x) - F(t,y) \| \leq L_F \| x-y \| \quad \text{for all $x,y \in H$ and $t \in [0,T]$.}
\end{align*}

\item $Q : H \rightarrow H$ is a nonnegative, selfadjoint, bounded operator such that ${\rm Tr} \, Q < \infty$ or $\sum_{n \in \mathbb{N}} \frac{\| B_n \|_{\alpha}}{\alpha_n} < \infty$, where $B_n = \langle B,e_n \rangle$ and
\begin{align*}
\| B_n \|_{\alpha} = \sup_{\genfrac{}{}{0pt}{}{t \in [0,T]}{x \in H}} \| B_n(t,x) \| + \sup_{\genfrac{}{}{0pt}{}{t \in [0,T]}{x,y \in H \text{ with } x \neq y}} \frac{\| B_n(t,x) - B_n(t,y) \|}{\| x-y \|^{\alpha}}.
\end{align*}

\item $Q_t := \int_0^t S_s Q S_s^* ds$ is a trace class operator for each $t > 0$.

\item $S_t(H) \subset Q_t^{1/2}(H)$ for each $t > 0$.

\item We have $\int_0^T \| Q_t^{-1/2} S_t \|^{1 + \theta} dt < \infty$ for some $\theta \geq \max \{ \alpha, 1-\alpha \}$.
\end{itemize}
Furthermore, in order to ensure the existence of martingale solutions, we suppose that $S_t$ is a compact operator for each $t > 0$. Then, as indicated in \cite{Flandoli}, strong existence holds true. Indeed, by \cite[Theorem~3.14]{Atma-book} we have the existence of martingale solutions, and by \cite[Theorem~7]{Flandoli} pathwise uniqueness holds true. Hence, according to Theorem~\ref{thm-main}, the SPDE (\ref{SPDE-ex}) has a unique mild solution.

\end{document}